\newenvironment{keywords}{\noindent\small {\it Keywords\/}:}{\vskip 4pt}
\newenvironment{classification}{\noindent\small 2000 {\it Mathematics Subject
Classification\/}:}{\vskip 12pt}
\newcommand{\comps}{{\mathbb C}}
\newcommand{\ints}{{\mathbb Z}}
\newcommand{\posints}{{\mathbb N}}
\newcommand{\torus}{{\mathbb T}}
\newcommand{\tensor}{\otimes}
\newcommand{\Tensor}{\hat{\otimes}}
\newcommand{\wTensor}{\check{\otimes}}
\newcommand{\cstar}{{C^\ast}}
\newcommand{\id}{{\mathrm{id}}}
\newcommand{\lspan}{{\operatorname{span}}}
\newcommand{\A}{{\mathfrak A}}
\newcommand{\B}{{\mathfrak B}}
\newcommand{\Hilbert}{{\mathfrak H}}
\newcommand{\M}{{\mathfrak M}}
\newcommand{\SL}{\operatorname{SL}}
\newcommand{\cl}[1]{#1^-}
\newcommand{\varcl}[1]{\overline{#1}}
\renewcommand{\baselinestretch}{1.2}
\newcommand{\dated}{\mbox{} \hfill {\small [{\tt \today}]}} 
\theoremstyle{plain}
\newtheorem{theorem}{Theorem}[section]
\newtheorem{lemma}[theorem]{Lemma}
\newtheorem{corollary}[theorem]{Corollary}
\theoremstyle{definition}
\newtheorem{definition}[theorem]{Definition}
\theoremstyle{remark}
\newtheorem*{remark}{Remark}
\newtheorem*{example}{Example}
\newtheorem*{rems}{Remarks}
\newtheorem*{exs}{Examples}
\theoremstyle{plain}
\newtheorem*{taklem}{Ozawa's Lemma}
\theoremstyle{definition}
\newtheorem{question}{Question}
\begin{document}

\title{(Non-)amenability of ${\cal B}(E)$}
\author{\textit{Volker Runde}}
\date{}

\maketitle

\begin{abstract}
In 1972, the late B.\ E.\ Johnson introduced the notion of an amenable Banach algebra and asked whether the Banach algebra ${\cal B}(E)$ of all bounded linear operators on a Banach space $E$ could ever be amenable if $\dim E = \infty$. Somewhat surprisingly, this question was answered positively only very recently as a by-product of the Argyros--Haydon result that solves the ``scalar plus compact problem'': there is an infinite-dimensional Banach space $E$, the dual of which is $\ell^1$, such that ${\cal B}(E) = {\cal K}(E)+ \comps \, \id_E$. Still, ${\cal B}(\ell^2)$ is not amenable, and in the past decade, ${\cal B}(\ell^p)$ was found to be non-amenable for $p=1,2,\infty$ thanks to the work of C.\ J.\ Read, G.\ Pisier, and N.\ Ozawa. We survey those results, and then---based on joint work with M.\ Daws---outline a proof that establishes the non-amenability of ${\cal B}(\ell^p)$ for all $p \in [1,\infty]$.
\end{abstract}
\begin{keywords}
amenable Banach algebras, Kazhdan's property $(T)$, $\ell^p$-spaces, ${\cal L}^p$-spaces, scalar plus compact problem.
\end{keywords}
\begin{classification}
Primary 47L10; Secondary 46B07, 46B45, 46H20.
\end{classification}

\section{Amenable Banach algebras} 

The phenomenon of amenability manifests itself in many areas of mathematics. Originally, amenability was a property of groups and semigroups (\cite{Pat}), but nowadays there are amenable representations, Banach algebras, von Neumann algebras, etc.
\par 
The notion of an amenable Banach algebra originates in B.\ E.\ Johnson's memoir \cite{Joh1}. The link between amenable locally compact groups and amenable Banach algebras is \cite[Theorem 2.5]{Joh1}: a locally compact group $G$ is amenable if and only if its group algebra $L^1(G)$ is an amenable Banach algebra. 
\par 
We briefly recall, not the original definition from \cite{Joh1}, but an equivalent characterization from \cite{Joh2}. Given a Banach algebra $\A$, the projective tensor product $\A \Tensor \A$ is a Banach $\A$-bimodule via
\[
  a \cdot (x \tensor y) := ax \tensor y \quad\text{and}\quad (x \tensor y) \cdot a :=x \tensor ya \qquad (a,x,y \in \A),
\]
so that the linear map $\Delta \!: \A \Tensor \A \to \A$ induced by multiplication becomes an $\A$-bimodule homomorphism.
\begin{definition}
A Banach algebra $\A$ is called \emph{amenable} if there is an \emph{approximate diagonal} for $\A$, i.e., a bounded net $( \boldsymbol{d}_\alpha )_\alpha$ in $\A \Tensor \A$ such that
\[
  a \cdot \boldsymbol{d}_\alpha - \boldsymbol{d}_\alpha \cdot a \to 0 \qquad (a \in \A)
\]
and 
\[
  a \Delta \boldsymbol{d}_\alpha \to a \qquad (a \in \A).
\]
\end{definition}
\par
For a modern account of the theory of amenable Banach algebras, see \cite{LoA}, for instance.
\begin{example}
Let $E$ be a \emph{finite-dimensional} Banach space. Then ${\cal B}(E)$, the algebra of all bounded linear operators on $E$ is isomorphic to the full matrix algebra $M_n$, where $n = \dim E$. Let $\{ e_{j,k} : j,k=1, \ldots, n \}$ be a set of matrix units for $M_n$. Then
\[
  \boldsymbol{d} := \frac{1}{n} \sum_{j,k=1}^n e_{j,k} \tensor e_{k,j} 
\]
satisfies
\[
  a \cdot \boldsymbol{d} = \boldsymbol{d} \cdot a \quad (a \in M_n) \qquad\text{and}\qquad \Delta \boldsymbol{d} = I_n,
\]
where $I_n$ is the identity matrix in $M_n$. We call $\boldsymbol{d}$ a \emph{diagonal} for $M_n \cong {\cal B}(E)$. In particular, ${\cal B}(E)$ is amenable.
\end{example}
\par
The memoir \cite{Joh1} concludes with 13 suggestions for further research, in particular (\cite[10.4]{Joh1})
\begin{quote}
\textit{Is ${\cal K}(E)$---\emph{the Banach algebra of all compact operators}---amenable for all Banach spaces $E$? Is ${\cal B}(E)$ ever amenable for any infinite-dimensional $E$?}
\end{quote}
and (\cite[10.2]{Joh2})
\begin{quote}
[\dots] \textit{Do there exist non-amenable $\cstar$-algebras, in particular is ${\cal B}(\Hilbert)$---\emph{where $\Hilbert$ is a Hilbert space}---amenable?}
\end{quote}
\par 
From a philosophical point of view, the answer to the question whether ${\cal B}(E)$ can be amenable for any infinite-dimensional $E$ ought to be a clear ``no'': a common feature of all the various manifestations of amenability is that amenable objects tend to be ``small''---whatever that may mean precisely---and ${\cal B}(E)$ simply feels too ``large'' to be amenable.

\section{Amenability of ${\cal K}(E)$ and the ``scalar plus compact problem''}

If $E$ is a Banach space with the approximation property, then ${\cal K}(E)$ is the closure of the finite rank operators in the operator norm and thus appears to be ``small'' enough to be amenable, at least for certain $E$. Indeed, already Johnson showed in \cite{Joh1} that ${\cal K}(\ell^p)$ is amenable for $p \in (1,\infty)$ (\cite[Proposition 6.1]{Joh1}) and that ${\cal K}({\cal C}(\torus))$, with $\torus$ denoting the unit circle, is amenable (\cite[Proposition 6.3]{Joh1}).
\par 
The amenability of ${\cal K}(E)$ was further investigated in \cite{GJW}. 
\par 
Recall that a finite \emph{bi-orthogonal} system over a Banach space $E$ is a set $\{ (x_j,x^\ast_k) : j,k=1, \ldots, n \} \subset E \times E^\ast$ such that $\langle x_j, x_k^\ast \rangle = \delta_{j,k}$ for $j,k=1, \ldots, n$. For a matrix $a = [ \alpha_{j,k} ]_{j,k=1}^n \in M_n$ define
\[
  \theta_\alpha (a) := \sum_{j,k=1}^n \alpha_{j,k} x_j \tensor x_k^\ast \in {\cal B}(E).
\]
Then $\theta \!: M_n \to {\cal B}(E)$ is an algebra homomorphism with range in the finite rank operators ${\cal F}(E)$.
\par 
The following is \cite[Definition 4.1]{GJW}:
\begin{definition}
A Banach space $E$ has \emph{property $(\mathbb{A})$} if there is a net $( \{ (x_{j,n_\lambda},x^\ast_{k,n_\lambda}) : j,k=1, \ldots, n_\lambda \})_{\lambda \in \Lambda}$ of finite bi-orthogonal systems over $E$ and a corresponding net of algebra homomorphism $\theta_\lambda \!: M_{n_\lambda} \to {\cal F}(E)$ with the following properties:
\begin{enumerate}[$\mathbb{A}$(i)]
\item $\theta_\lambda(I_{n_\lambda}) \to \id_E$ uniformly on compact subsets of $E$;
\item $\theta_\lambda(I_{n_\lambda})^\ast \to \id_{E^\ast}$ uniformly on compact subsets of $E^\ast$;
\item for each $\lambda \in \Lambda$, there is a finite group $G_\lambda$ of invertible elements of $M_{n_\lambda}$ with $M_{n_\lambda} = \lspan \, G_\lambda$ such that
\[
  \sup_{\lambda \in \Lambda} \sup_{x \in G_\lambda} \| \theta_\lambda(x) \| < \infty.
\]
\end{enumerate}
\end{definition}
\begin{example}
For every compact Hausdorff space $K$, the Banach space ${\cal C}(K)$ has property $(\mathbb{A})$; also, all $L^p$-spaces with $p \in [1,\infty]$ have property $(\mathbb{A})$ (\cite[Theorem 4.7]{GJW}).
\end{example}
\par 
The definition of property $(\mathbb{A})$ may appear overly technical at the first glance, but the simple idea behind it is to build an approximate diagonal for ${\cal K}(E)$ out of diagonals for the full matrix algebras $M_{n_\lambda}$. This means (\cite[Theorem 4.2]{GJW}):
\begin{theorem}
Let $E$ be a Banach space with property $(\mathbb{A})$. Then ${\cal K}(E)$ is amenable.
\end{theorem}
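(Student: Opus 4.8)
The plan is to build an approximate diagonal for ${\cal K}(E)$ by pushing \emph{genuine} diagonals of the matrix algebras $M_{n_\lambda}$ forward through the homomorphisms $\theta_\lambda$, with the three clauses of property $(\mathbb{A})$ entering respectively to bound the resulting net and to kill the error terms. Set $C := \sup_\lambda \sup_{x \in G_\lambda} \| \theta_\lambda(x) \| < \infty$. First I would note that for a finite group $G$ of invertibles with $\lspan\, G = M_n$, the element $\frac{1}{|G|} \sum_{g \in G} g \tensor g^{-1}$ is a diagonal for $M_n$: it maps to $I_n$ under $\Delta$, and conjugating it by $h \in G$ and reindexing $g \mapsto hg$ shows it commutes with $h$, hence with $\lspan\, G = M_n$. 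Transporting the diagonal for $G_\lambda$ through $\theta_\lambda$, the candidate net is
\[
  \boldsymbol{D}_\lambda := \frac{1}{|G_\lambda|} \sum_{g \in G_\lambda} \theta_\lambda(g) \tensor \theta_\lambda(g^{-1}) \in {\cal F}(E) \tensor {\cal F}(E) \subseteq {\cal K}(E) \Tensor {\cal K}(E).
\]
Reading off this representation, $\| \boldsymbol{D}_\lambda \| \le C^2$, so $( \boldsymbol{D}_\lambda )_\lambda$ is bounded — this is exactly where clause $\mathbb{A}$(iii) is needed, since the matrix-unit diagonal $\frac{1}{n_\lambda}\sum e_{j,k}\tensor e_{k,j}$ of the Example has no a priori bounded projective norm. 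Since $\theta_\lambda$ is an algebra homomorphism, $\theta_\lambda \Tensor \theta_\lambda$ intertwines the bimodule actions (along $\theta_\lambda$) and commutes with $\Delta$; writing $P_\lambda := \theta_\lambda(I_{n_\lambda})$, this gives $\Delta \boldsymbol{D}_\lambda = P_\lambda$ (so $\| P_\lambda \| \le C^2$ too), $b \cdot \boldsymbol{D}_\lambda = \boldsymbol{D}_\lambda \cdot b$ for every $b$ in the range of $\theta_\lambda$, and, using $\theta_\lambda(I_{n_\lambda}) \theta_\lambda(x) = \theta_\lambda(x) = \theta_\lambda(x) \theta_\lambda(I_{n_\lambda})$, the identities $P_\lambda \cdot \boldsymbol{D}_\lambda = \boldsymbol{D}_\lambda = \boldsymbol{D}_\lambda \cdot P_\lambda$.

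The key step — and the one I expect to be the crux — is that $P_\lambda a P_\lambda$ lies in the range of $\theta_\lambda$ for every finite-rank $a$, so that $\boldsymbol{D}_\lambda$ commutes with $P_\lambda a P_\lambda$. By linearity it suffices to treat a rank-one $a = y \tensor z^\ast \colon \xi \mapsto \langle \xi, z^\ast \rangle y$; a short computation using $P_\lambda \colon \xi \mapsto \sum_j \langle \xi, x^\ast_{j,n_\lambda} \rangle x_{j,n_\lambda}$ and bi-orthogonality shows that $P_\lambda a P_\lambda = \bigl( \sum_k \langle y, x^\ast_{k,n_\lambda} \rangle x_{k,n_\lambda} \bigr) \tensor \bigl( \sum_j \langle x_{j,n_\lambda}, z^\ast \rangle x^\ast_{j,n_\lambda} \bigr)$, which is $\theta_\lambda$ applied to the rank-one matrix $[ \langle y, x^\ast_{j,n_\lambda} \rangle \langle x_{k,n_\lambda}, z^\ast \rangle ]_{j,k}$.

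With this in hand I would verify the two defining properties of an approximate diagonal for $a \in {\cal F}(E)$, then pass to general $a \in {\cal K}(E)$ using the boundedness of $( \boldsymbol{D}_\lambda )_\lambda$ together with ${\cal K}(E) = \overline{{\cal F}(E)}$ — a consequence of $\mathbb{A}$(i), since the image of the unit ball of $E$ under a compact $a$ is relatively compact, whence $\| P_\lambda a - a \| \to 0$. For compact $a$, $\mathbb{A}$(i) gives $\| P_\lambda a - a \| \to 0$ and, applied to the compact operator $a^\ast$, $\mathbb{A}$(ii) gives $\| a P_\lambda - a \| = \| P_\lambda^\ast a^\ast - a^\ast \| \to 0$. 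The diagonal condition is then immediate: $a \, \Delta \boldsymbol{D}_\lambda = a P_\lambda \to a$. For the commutation condition, using $\boldsymbol{D}_\lambda = P_\lambda \cdot \boldsymbol{D}_\lambda = \boldsymbol{D}_\lambda \cdot P_\lambda$ and the commutation of $\boldsymbol{D}_\lambda$ with $P_\lambda a P_\lambda$, I would telescope
\[
  a \cdot \boldsymbol{D}_\lambda - \boldsymbol{D}_\lambda \cdot a = (a P_\lambda) \cdot \boldsymbol{D}_\lambda - \boldsymbol{D}_\lambda \cdot (P_\lambda a) = \bigl( (a - P_\lambda a) P_\lambda \bigr) \cdot \boldsymbol{D}_\lambda - \boldsymbol{D}_\lambda \cdot \bigl( P_\lambda(a - a P_\lambda) \bigr),
\]
whose norm is at most $C^4 \bigl( \| a - P_\lambda a \| + \| a - a P_\lambda \| \bigr) \to 0$. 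This exhibits $( \boldsymbol{D}_\lambda )_\lambda$ as an approximate diagonal, so ${\cal K}(E)$ is amenable.

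In summary, the only genuinely non-formal point is the bi-orthogonality identity $P_\lambda a P_\lambda \in \theta_\lambda(M_{n_\lambda})$, which is what lets the honest diagonal $\frac{1}{|G_\lambda|} \sum g \tensor g^{-1}$ keep working after truncation by $P_\lambda$; clauses $\mathbb{A}$(i) and $\mathbb{A}$(ii) then enter only through the soft estimates $\| P_\lambda a - a \| \to 0$ and $\| a P_\lambda - a \| \to 0$ for compact $a$, and clause $\mathbb{A}$(iii) only through the uniform bound $C$. The density reduction and the telescoping estimate are routine.
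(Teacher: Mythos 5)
Your argument is correct and is essentially the approach the paper has in mind (and that Gr{\o}nb{\ae}k--Johnson--Willis carry out): transport the group-averaged diagonal $\frac{1}{|G_\lambda|}\sum_{g \in G_\lambda} g \otimes g^{-1}$ of $M_{n_\lambda}$ through $\theta_\lambda$, with $\mathbb{A}$(iii) giving the uniform bound on the net and $\mathbb{A}$(i)--(ii) giving $\| P_\lambda a - a \| \to 0$ and $\| a P_\lambda - a \| \to 0$ for compact $a$. The only quibbles are cosmetic: $P_\lambda a P_\lambda \in \theta_\lambda(M_{n_\lambda})$ in fact holds for \emph{every} bounded $a$ (bi-orthogonality is rather what makes $\theta_\lambda$ multiplicative and $P_\lambda$ idempotent), so the finite-rank/density detour is unnecessary, though harmless.
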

\par 
This establishes, in particular, the amenability for ${\cal K}(E)$ for every $L^p$-space $E$ with $p \in [1,\infty]$. For further results on the amenability (and non-amenability) of ${\cal K}(E)$---or rather: $\varcl{{\cal F}(E)}$---, we refer to \cite{GJW} and \cite{BG}. 
\par 
For many Banach spaces $E$, it is easy to come up with many bounded, non-compact operators: for example, if $E$ has an unconditional basis---so that we can think of operators on $E$ as matrices---, then diagonal matrices and permutation matrices provide examples of operators in ${\cal B}(E) \setminus {\cal K}(E)$ if $\dim E = \infty$. Still, for a general Banach space $E$, it can be difficult to find operators that aren't compact (besides the identity and its scalar multiples).
\par 
The following problem has been known as the ``scalar plus compact problem'':
\begin{quote}
\textit{Is there an infinite-dimensional Banach space $E$ such that ${\cal B}(E) = {\cal K}(E) + \comps \, \id_E$?}
\end{quote}
\par 
Somewhat surprisingly, this problem was recently solved affirmatively by S.\ A.\ Argyros and R.\ G.\ Haydon (\cite{AH}):
\begin{theorem}
There is a Banach space $E$ with $E^\ast = \ell^1$ such that ${\cal B}(E) = {\cal K}(E) + \comps \, \id_E$.
\end{theorem}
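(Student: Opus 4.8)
The plan is to abandon any ``soft'' argument and construct $E$ explicitly, fusing two constructions that individually predate the result but had never been combined: the Bourgain--Delbaen method for producing exotic ${\cal L}^\infty$-spaces, and the Gowers--Maurey machinery of mixed Tsirelson norms that produces hereditarily indecomposable (HI) spaces. The requirement $E^\ast = \ell^1$ already pushes us toward the Bourgain--Delbaen framework: one builds $E$ as the closed linear span in $\ell^\infty(\Gamma)$, over a countable index set $\Gamma = \bigcup_n \Delta_n$, of a ``Bourgain--Delbaen basis'' $(d_\gamma)_{\gamma \in \Gamma}$ determined by carefully chosen extension functionals $c^\ast_\gamma$ ($\gamma \in \Delta_{n+1}$), arranged so that each finite stage is uniformly isomorphic to $\ell^\infty_{|\Delta_n|}$. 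Keeping the norms of the $c^\ast_\gamma$ under control makes $E$ an ${\cal L}^\infty$-space, and the point evaluations $(e^\ast_\gamma)_{\gamma \in \Gamma}$ restrict to a copy of $\ell^1$ inside $E^\ast$; once the basis $(d_\gamma)$ is shown to be shrinking, this copy exhausts $E^\ast$, giving $E^\ast = \ell^1$.

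First I would split the extension functionals into two types: ``type-$0$'' functionals that merely rescale a single earlier point evaluation, and ``type-$1$'' functionals of the shape $\theta\sum_\xi b^\ast_\xi$ whose summands come from a weighted analysis constrained by a Schreier-type admissibility condition. This is exactly the device that transplants a mixed Tsirelson norm into the ${\cal L}^\infty$-setting. Next I would reconstruct the HI toolkit in this context: rapidly increasing sequences (RIS), a ``basic inequality'' bounding $\bigl\| \sum_k a_k x_k \bigr\|$ for an RIS by a model mixed-Tsirelson quantity, and from these the existence of exact pairs and of long dependent sequences on which any two disjointly supported averages are forced to be nearly proportional. The dependent-sequence argument then shows that no infinite-dimensional closed subspace of $E$ splits as a topological direct sum, so $E$ is HI, and the standard Gowers--Maurey consequence is that every $T \in {\cal B}(E)$ has the form $T = \lambda\, \id_E + S$ with $S$ strictly singular.

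The hard part---and the reason the problem resisted for decades---is the last step: upgrading ``strictly singular'' to ``compact'', so that ${\cal B}(E) = {\cal K}(E) + \comps\,\id_E$. On a reflexive HI space this upgrade simply fails, so the ${\cal L}^\infty$-structure must do real work. I would argue by a dichotomy: on block subspaces spanned by an RIS, the basic inequality forces a strictly singular operator to have vanishingly small action, hence to be ``compact along RIS''; then a gliding-hump argument, using the local $\ell^\infty_n$-structure of $E$ to approximate arbitrary vectors by blocks adapted to the $\Delta_n$'s, promotes this to relative norm-compactness of $S$ on the unit ball. The genuine difficulty is that ``HI'' and ``strictly singular $=$ compact'' pull the parameters in opposite directions: one must choose the Bourgain--Delbaen data and the Schreier families governing the type-$1$ functionals delicately enough that the RIS estimates are strong enough to collapse strict singularity all the way down to compactness, yet still loose enough to admit the dependent sequences that deliver the HI property. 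Calibrating this tension is the technical heart of the construction, and the step I expect to be the main obstacle.
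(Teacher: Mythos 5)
You should first be aware that the paper does not prove this statement at all: it is quoted as the Argyros--Haydon theorem and justified only by the citation \cite{AH}. What you have written is, architecturally, a faithful outline of the actual Argyros--Haydon construction: a Bourgain--Delbaen ${\cal L}^\infty$-space built from extension functionals over $\Gamma = \bigcup_n \Delta_n$, with $E^\ast = \ell^1(\Gamma)$ coming from the shrinking property of the Bourgain--Delbaen system; weighted functionals of ``age'' $0$ and $\geq 1$ transplanting a mixed Tsirelson norm into the ${\cal L}^\infty$-frame; rapidly increasing sequences, a basic inequality, exact pairs and dependent sequences giving both the hereditarily indecomposable property and the fact that every operator is a scalar perturbation of one that kills RIS; and finally the ${\cal L}^\infty$/$\ell^1$-dual structure upgrading this to compactness. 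So your proposal identifies the right route, and your diagnosis of where the difficulty sits (the tension between the HI-type estimates and the ``strictly singular $\Rightarrow$ compact'' upgrade) is accurate.

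That said, as a proof it is a program rather than an argument: every genuinely hard step is named but not carried out, and those steps are precisely the content of \cite{AH} (the verification of the basic inequality for the specific Bourgain--Delbaen data, the zero-type estimates on dependent sequences, and the lemma that an operator annihilating RIS asymptotically is compact, which uses weakly Cauchy subsequences available because $\ell^1$ does not embed in $E$ and the finite-dimensional decomposition is shrinking). Two smaller corrections: the admissibility in the Argyros--Haydon construction is governed by the families ${\cal A}_{n_j}$ of sets of cardinality at most $n_j$ (as in mixed Tsirelson spaces $T[({\cal A}_{n_j}, 1/m_j)_j]$), not by Schreier families; and the ``scalar plus compact'' property is not obtained by first proving HI and then upgrading strictly singular operators---the operator theorem is proved directly from the RIS/dependent-sequence machinery together with the $\ell^1$-duality, with the HI property established as a parallel (and logically independent) consequence of the same estimates. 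So the proposal should be read as a correct road map to the cited theorem, not as a proof of it.
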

\par 
As H.\ G.\ Dales pointed out, this also settles affirmatively the question of whether ${\cal B}(E)$ can be amenable for an infinite-dimensional Banach space:
\begin{corollary} \label{spccor}
There is an infinite-dimensional Banach space $E$ such that ${\cal B}(E)$ is amenable. 
\end{corollary}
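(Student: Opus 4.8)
The plan is to combine the ideal structure that the Argyros--Haydon theorem imposes on ${\cal B}(E)$ with two standard permanence properties of amenability. Fix an infinite-dimensional Banach space $E$ with $E^\ast = \ell^1$ and ${\cal B}(E) = {\cal K}(E) + \comps\,\id_E$, as provided by the theorem above. Since $\dim E = \infty$, the operator $\id_E$ is not compact, so ${\cal K}(E)$ --- which is always a closed two-sided ideal of ${\cal B}(E)$ --- is a \emph{proper} one; moreover each $T \in {\cal B}(E)$ has a unique representation $T = K + \lambda\,\id_E$ with $K \in {\cal K}(E)$ and $\lambda \in \comps$, and $T \mapsto \lambda$ is then a unital algebra homomorphism of ${\cal B}(E)$ onto $\comps$ with kernel ${\cal K}(E)$. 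Hence ${\cal B}(E)/{\cal K}(E) \cong \comps$ as Banach algebras, every one-dimensional Banach algebra being topologically isomorphic to $\comps$.

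The first ingredient is trivial: $\comps$ is an amenable Banach algebra, $\one \tensor \one$ being a diagonal (more generally, every finite-dimensional semisimple Banach algebra is amenable). The second, and only substantial, ingredient is that the ideal ${\cal K}(E)$ is itself amenable. For this I would appeal to property $(\mathbb{A})$: the Argyros--Haydon space is obtained through a Bourgain--Delbaen type construction and is, in particular, a separable ${\cal L}^\infty$-space, and the methods of \cite{GJW} establish property $(\mathbb{A})$ for ${\cal L}^p$-spaces, $p \in [1,\infty]$ --- the case $p = \infty$ being the one we need. Our Theorem on property $(\mathbb{A})$ above then yields that ${\cal K}(E)$ is amenable. (All that is really used is that $E$ itself has property $(\mathbb{A})$, so as an alternative one could try to verify $\mathbb{A}$(i)--$\mathbb{A}$(iii) directly from the Bourgain--Delbaen data defining the space; this is where whatever Banach-space geometry is needed has to be extracted.)

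It remains to splice these together, and for this I would invoke the standard hereditary theorem for amenability: if $I$ is a closed two-sided ideal of a Banach algebra $\A$ and both $I$ and $\A/I$ are amenable, then $\A$ is amenable --- the point being that an amenable ideal has a bounded approximate identity, which is exactly what is needed to lift an approximate diagonal from the quotient and patch it together with one for the ideal. Applied with $\A = {\cal B}(E)$ and $I = {\cal K}(E)$, this gives that ${\cal B}(E)$ is amenable, which is the assertion. The genuinely hard input is the Argyros--Haydon theorem, which we are granting; beyond that, the only step that is not pure formalism is the amenability of ${\cal K}(E)$, and even that is reduced to bookkeeping once the ${\cal L}^\infty$-nature of their space is recognised and \cite{GJW} is quoted --- Steps~1 and~3 being soft and purely algebraic.
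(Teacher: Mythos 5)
Your proposal is correct and takes essentially the same route as the paper: in both arguments the only substantive step is the amenability of ${\cal K}(E)$ obtained from \cite{GJW}, after which ${\cal B}(E) = {\cal K}(E) + \comps\,\id_E$ is dealt with by a soft permanence property (you use the extension theorem for the closed ideal ${\cal K}(E)$ with quotient $\comps$, while the paper simply observes that ${\cal B}(E)$ is the unitization of the amenable algebra ${\cal K}(E)$ --- equivalent formalities). The only cosmetic difference is how the hypothesis on $E$ enters: the paper deduces property $(\mathbb{A})$ for $E$ from $E^\ast = \ell^1$ via \cite[Theorem 4.3]{GJW}, whereas you appeal to the ${\cal L}^\infty$-nature of the Argyros--Haydon space and the \cite{GJW} results on ${\cal K}(E)$ for ${\cal L}^p$-spaces; both are legitimate (and indeed $E^\ast = \ell^1$ already forces $E$ to be an ${\cal L}^\infty$-space), though strictly speaking \cite{GJW} obtain amenability of ${\cal K}(E)$ for general ${\cal L}^p$-spaces by a factorization argument rather than by verifying property $(\mathbb{A})$ itself.
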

\begin{proof}
Let $E$ be an infinite-dimensional Banach space with $E^\ast = \ell^1$ and ${\cal B}(E) = {\cal K}(E) + \comps \, \id_E$. By \cite[Theorem 4,7]{GJW}, $\ell^1$ has property $(\mathbb{A})$ and so has its predual $E$ by \cite[Theorem 4.3]{GJW}. Consequently, ${\cal K}(E)$ is amenable as is its unitization ${\cal B}(E)$.
\end{proof}
\par 
The crucial feature in the proof of Corollary \ref{spccor} is, of course, that ${\cal B}(E)$---contrary to intuition---can be very ``small'' relative to ${\cal K}(E)$.

\section{The Hilbert space case}

The question of wether ${\cal B}(\Hilbert)$ was amenable for an infinite-dimensional Hilbert space $\Hilbert$ was settled relatively soon after it had been posed, as a by-product of some penetrating research on $\cstar$-algebras and their enveloping von Neumann algebras.
\par 
If $\A$ is a $\cstar$-algebra, then so is the algebra $M_n(\A)$ for each $n \in \posints$. We call a linear map $T \!: \A \to \B$ between two $\cstar$-algebras \emph{completely positive} if $\id_{M_n} \tensor T \!: M_n(\A) \to M_n(\B)$ is positive for each $n \in \posints$.
\begin{definition}
A $\cstar$-algebra is called \emph{nuclear} if there are nets $(n_\lambda)_\lambda$ of positive integers and of completely positive contractions $S_\lambda \!: \A \to M_{n_\lambda}$ and $T_\lambda \!: M_{n_\lambda} \to \A$ such that $T_\lambda \circ S_\lambda \to \id_\A$ pointwise on $\A$.
\end{definition}
\par 
This is not the original definition (\cite[Definition XV.1.4]{Tak}) of nuclearity, but it's equivalent to it (\cite[Theorem XV.1.7]{Tak}). Loosely speaking, nuclearity means that the identity factors approximately through full matrix algebras. This suggests that nuclear $\cstar$-algebras shouldn't be too ``large'', which puts nuclearity close to amenability, and indeed:
\begin{theorem} \label{nucam}
The following are equivalent for a $\cstar$-algebra $\A$:
\begin{enumerate}[\rm (i)]
\item $\A$ is nuclear;
\item $\A$ is amenable.
\end{enumerate}
\end{theorem}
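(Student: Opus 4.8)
The two implications are of very unequal depth, so I would establish them separately; (i) $\Rightarrow$ (ii) is a celebrated theorem of U.\ Haagerup and carries essentially all the difficulty. I would prove (ii) $\Rightarrow$ (i) through injectivity of the enveloping von Neumann algebra. From a bounded approximate diagonal $(\boldsymbol{d}_\alpha)_\alpha$ one passes to a weak$^\ast$ cluster point $M \in (\A \Tensor \A)^{\ast\ast}$, a \emph{virtual diagonal}: $a \cdot M = M \cdot a$ and $\Delta^{\ast\ast}(M) \cdot a = a$ for all $a \in \A$. Represent $\A$ faithfully and non-degenerately on a Hilbert space $\Hilbert$; then for each $T \in {\cal B}(\Hilbert)$ the bilinear map $(a,b) \mapsto aTb$ extends to a bounded linear map $\A \Tensor \A \to {\cal B}(\Hilbert)$, which one evaluates against $M$ to obtain an operator $E(T) \in {\cal B}(\Hilbert)$. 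Using $a \cdot M = M \cdot a$ one checks that $E(T)$ commutes with $\A'$, so $E(T) \in \A''$, and using $\Delta^{\ast\ast}(M) \cdot a = a$ one checks that $E$ is the identity on $\A''$; thus $E$ is a norm-one projection of ${\cal B}(\Hilbert)$ onto $\A''$, i.e.\ $\A''$, and hence $\A^{\ast\ast}$, is \emph{injective}. I would then invoke the deep theorem of Connes, Choi--Effros and Wassermann that a von Neumann algebra is injective if and only if it is \emph{semidiscrete}: $\id_{\A^{\ast\ast}}$ is a point--weak$^\ast$ limit of completely positive contractions factoring through full matrix algebras. Restricting these to $\A$, correcting the second legs by a quasi-central approximate identity so that they take values in $\A$ rather than in $\A^{\ast\ast}$, extending the first legs by Arveson's extension theorem, and passing to convex combinations to convert point--weak$^\ast$ convergence into point--norm convergence, one arrives at the nets $S_\lambda \!: \A \to M_{n_\lambda}$ and $T_\lambda \!: M_{n_\lambda} \to \A$ demanded by the definition of nuclearity.

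For (i) $\Rightarrow$ (ii) the goal is to manufacture a bounded approximate diagonal for $\A$ out of a completely positive factorization $T_\lambda \circ S_\lambda \to \id_\A$ through matrix algebras. Each $M_{n_\lambda}$ carries the genuine diagonal $\boldsymbol{d}_{n_\lambda} = \tfrac{1}{n_\lambda}\sum_{j,k=1}^{n_\lambda} e_{j,k} \tensor e_{k,j}$ of the Example, so the obvious candidate is the pushforward $(T_\lambda \tensor T_\lambda)(\boldsymbol{d}_{n_\lambda}) \in \A \Tensor \A$. The trouble --- and this is where the whole difficulty sits --- is that $S_\lambda$ and $T_\lambda$ are merely completely positive rather than multiplicative, so this element is \emph{not} asymptotically central and $\Delta$ applied to it is not controlled.

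Following Haagerup, I would dilate $S_\lambda$ and $T_\lambda$ by Stinespring to present them through $\ast$-homomorphisms and compressions, use that $T_\lambda S_\lambda \to \id_\A$ in norm on a dense subset together with the normalized trace on $M_{n_\lambda}$, and estimate the commutator defects via Cauchy--Schwarz-type inequalities for completely positive maps (in the spirit of Kadison's inequality); suitably perturbed pushforwards then assemble into a bounded approximate diagonal for $\A$. Turning the approximate multiplicativity of the $S_\lambda$ and $T_\lambda$ into genuine asymptotic centrality of the transported diagonals is the crux of the matter and the main obstacle; once that estimate is in place, the remaining bookkeeping is routine.
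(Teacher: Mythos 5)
You should note at the outset that the paper does not prove Theorem \ref{nucam} at all: it records it as the Connes--Haagerup theorem, discusses the history, and refers to \cite[Chapter 6]{LoA} for a self-contained proof. So your proposal has to be judged against the standard arguments you are sketching, and there it has a concrete error. In the direction (ii) $\Longrightarrow$ (i), with the module actions $a\cdot(x\tensor y)=ax\tensor y$ and $(x\tensor y)\cdot a=x\tensor ya$, evaluating the virtual diagonal $M$ against $x\tensor y\mapsto xTy$ and using $a\cdot M=M\cdot a$ gives $aE(T)=E(T)a$ for all $a\in\A$, i.e.\ $E(T)\in\A'$; and it is $\A'$, not $\A''$, that $E$ fixes, because for $T'\in\A'$ the map becomes $x\tensor y\mapsto T'xy$ and nondegeneracy makes $\Delta^{\ast\ast}(M)$ act as the identity. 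Your claim that $E(T)$ commutes with $\A'$ is false: for $T'\in\A'$ one would need $\sum_k x_kTy_kT'=\sum_k x_kT'Ty_k$, and there is no reason for $T$ and $T'$ to commute. Thus the construction yields a bounded projection of ${\cal B}(\Hilbert)$ onto the commutant $\A'$, and to reach injectivity of $\A''$ (equivalently of $\A^{\ast\ast}$ in the universal representation) you must add the commutation theorem for injectivity ($\M$ injective if and only if $\M'$ injective), a genuinely non-trivial ingredient resting on Tomita--Takesaki theory that your sketch omits. Also, $\|E\|\le\|M\|$ need not be $1$, so ``norm-one projection'' is unjustified; one needs either a norm reduction or the (again non-trivial) fact that a bounded quasi-expectation onto a von Neumann algebra already forces injectivity.

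For (i) $\Longrightarrow$ (ii) you have written a programme rather than a proof: you correctly observe that the pushforwards $(T_\lambda\tensor T_\lambda)(\boldsymbol{d}_{n_\lambda})$ fail to be asymptotically central, and then you defer exactly that point, calling the required estimate ``the crux of the matter and the main obstacle''. But that estimate \emph{is} Haagerup's theorem; it is obtained with heavy machinery, most notably his Grothendieck-type inequality for $\cstar$-algebras, and none of it is reproduced or even reduced to a precise quotable statement in your outline. Since the paper itself only cites Connes, Haagerup, and \cite[Chapter 6]{LoA} here, outlining the known route is a reasonable choice of level, but as it stands your text establishes neither implication: one direction contains the $\A'$-versus-$\A''$ error described above, and the other leaves its decisive step open.
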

\par 
Theorem \ref{nucam} is usually attributed to A.\ Connes---for (ii) $\Longrightarrow$ (i)---and U.\ Haagerup---for (i) $\Longrightarrow$ (ii). However, Connes proved (ii) $\Longrightarrow$ (i) only for the case where $\A^\ast$ is separable, and it took the work of several other mathematicians to establish this implication in the general case. For a self-contained proof of Theorem \ref{nucam} and references to the original literature, see \cite[Chapter 6]{LoA}. It appears that Theorem \ref{nucam} was known in its full generality by the early 1980s.
\par 
Already in 1976, S.\ Wassermann had proven (\cite{Was}):
\begin{theorem} \label{simon}
The following are equivalent for a von Neumann algebra $\M$:
\begin{enumerate}[\rm (i)]
\item $\M$ is nuclear;
\item $\M$ is \emph{subhomogeneous}, i.e., there are $n_1, \ldots, n_k \in \posints$ and commutative von Neumann algebras $\M_1, \ldots, \M_k$ such that
\[
  \M \cong M_{n_1}(\M_1) \oplus \cdots \oplus M_{n_k}(\M_k)
\]
\end{enumerate}
\end{theorem}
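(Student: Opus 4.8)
For the implication (ii) $\Longrightarrow$ (i) there is little to do. Every commutative $\cstar$-algebra is nuclear: for $\A = {\cal C}(K)$, a finite open cover of $K$ with a subordinate partition of unity $f_1, \ldots, f_n$ and a point $k_j$ picked in the $j$-th set yields the completely positive contractions $S \!: \A \to \comps^n$, $g \mapsto (g(k_j))_j$, and $T \!: \comps^n \to \A$, $(\lambda_j)_j \mapsto \sum_j \lambda_j f_j$, and refining the cover forces $T \circ S \to \id_\A$ pointwise; in particular every commutative von Neumann algebra is nuclear. Nuclearity is inherited by matrix amplifications, since $\id_{M_n} \tensor S_\lambda$ and $\id_{M_n} \tensor T_\lambda$ witness that $M_n(\A)$ is nuclear whenever $\A$ is, and by finite direct sums; hence $M_{n_1}(\M_1) \oplus \cdots \oplus M_{n_k}(\M_k)$ is nuclear for commutative $\M_1, \ldots, \M_k$. (One could equally argue via Theorem \ref{nucam}, the commutative, matricial and finite-direct-sum permanence being just as transparent for amenability.)

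The implication (i) $\Longrightarrow$ (ii) is the substantial one, and I would prove its contrapositive. Two permanence properties of nuclearity are needed, both routine: it passes to quotients by closed two-sided ideals (via Theorem \ref{nucam} and the fact that a quotient of an amenable Banach algebra by a closed ideal is amenable), and it passes to the range of a conditional expectation ${\cal E} \!: \A \to \B \subseteq \A$ onto a $\cstar$-subalgebra, since then $ {\cal E} \circ T_\lambda \!: M_{n_\lambda} \to \B$ and $S_\lambda|_\B \!: \B \to M_{n_\lambda}$ are completely positive contractions with $({\cal E} \circ T_\lambda) \circ (S_\lambda|_\B) \to \id_\B$. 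Now suppose $\M$ is not subhomogeneous. Then its type decomposition has a non-zero central summand of at least one of the following kinds: \textup{(a)} a properly infinite one; \textup{(b)} one of type $\mathrm{II}_1$; \textup{(c)} a summand $\bigoplus_{n \in S} M_n({\cal Z}_n)$ of the finite type $\mathrm{I}$ part with $S \subseteq \posints$ \emph{infinite} and each ${\cal Z}_n$ a non-zero commutative von Neumann algebra. A central summand is a quotient of $\M$, hence would be nuclear. In case \textup{(a)}, a properly infinite von Neumann algebra $\N$ satisfies $\N \cong \N \mathbin{\overline{\otimes}} {\cal B}(\ell^2)$, so ${\cal B}(\ell^2)$ lies unitally in $\N$; being injective, it is the range of a conditional expectation of $\N$. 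In case \textup{(b)}, the type $\mathrm{II}_1$ summand contains a unital copy of the hyperfinite $\mathrm{II}_1$ factor $R$ — build a chain $M_2 \subseteq M_4 \subseteq \cdots$ of unital matrix subalgebras by repeatedly halving projections and take the weak closure — and $R$, again being injective, is the range of a conditional expectation. In case \textup{(c)}, a choice of character $\chi_n \!: {\cal Z}_n \to \comps$ for each $n \in S$, acting coordinatewise, gives a surjective $\ast$-homomorphism $\bigoplus_{n \in S} M_n({\cal Z}_n) \twoheadrightarrow \prod_{n \in S} M_n$ (a bounded lift of $(b_n)_n$ being $(b_n \tensor 1_{{\cal Z}_n})_n$). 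So in the three cases the nuclearity of $\M$ would force that of ${\cal B}(\ell^2)$, of $R$, and of $\prod_{n \in \posints} M_n$, respectively.

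Everything thus reduces to the \emph{non}-nuclearity of these three ``large'' von Neumann algebras, which is both the main obstacle and the real content of the theorem. A nuclear $\cstar$-algebra $\A$ satisfies $\A \otimes_{\min} \B = \A \otimes_{\max} \B$ for every $\cstar$-algebra $\B$ (the classical characterization of nuclearity through uniqueness of the $\cstar$-tensor norm, \cite{Tak}), so it is enough to produce, for each of ${\cal B}(\ell^2)$, $R$ and $\prod_n M_n$, a partner on which the minimal and maximal $\cstar$-tensor norms disagree, and this is where Kazhdan's property $(T)$ becomes indispensable. For a residually finite infinite group $\Gamma$ with property $(T)$ and infinitely many finite-dimensional irreducible representations — say $\Gamma = \SL_3(\ints)$ — the Kazhdan projection in $\cstar(\Gamma)$ obstructs the equality $\cstar(\Gamma) \otimes_{\min} \cstar(\Gamma) = \cstar(\Gamma) \otimes_{\max} \cstar(\Gamma)$, so that $\cstar(\Gamma)$ is non-nuclear, and Wassermann's estimates transport this to ${\cal B}(\ell^2) \otimes_{\min} {\cal B}(\ell^2) \ne {\cal B}(\ell^2) \otimes_{\max} {\cal B}(\ell^2)$, to the non-exactness (a fortiori non-nuclearity) of $\prod_n M_n$, and to the failure of $\min = \max$ for a $\mathrm{II}_1$ factor. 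Granting these tensor-norm facts, all three cases give contradictions, and $\M$ must be subhomogeneous. What remains is genuinely soft — that failure of subhomogeneity forces one of the summands \textup{(a)}--\textup{(c)}, that properly infinite von Neumann algebras absorb ${\cal B}(\ell^2)$, and that type $\mathrm{II}_1$ von Neumann algebras contain a copy of $R$ — all part of the standard classification and comparison theory of von Neumann algebras.
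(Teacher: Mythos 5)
You have correctly identified the logical skeleton: the easy direction via permanence of nuclearity under commutative algebras, matrix amplifications and finite sums; and, for the converse, the reduction of a non-subhomogeneous von Neumann algebra, through central summands (quotients) and conditional expectations onto injective subalgebras, to the three model algebras ${\cal B}(\ell^2)$, a hyperfinite type $\mathrm{II}_1$ algebra, and $\prod_n M_n$. Note, however, that the survey offers no proof of this theorem at all --- it is quoted from Wassermann \cite{Was} --- so the only question is whether your argument stands on its own; and as it stands it does not, because the entire analytic content is concentrated in the three non-nuclearity statements that you explicitly ``grant''. Saying that the Kazhdan projection obstructs $\cstar(\Gamma) \otimes_{\min} \cstar(\Gamma) = \cstar(\Gamma) \otimes_{\max} \cstar(\Gamma)$ for $\Gamma = \SL(3,\ints)$ and that ``Wassermann's estimates transport this'' to ${\cal B}(\ell^2)$, to $\prod_n M_n$ and to the $\mathrm{II}_1$ case is not a proof: the transport \emph{is} the theorem. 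Worse, the facts you reach for are stronger and later than what is available: $\min \neq \max$ on ${\cal B}(\ell^2) \otimes {\cal B}(\ell^2)$ is the Junge--Pisier theorem of 1995, and non-exactness of $\prod_n M_n$ is likewise a much later (Kirchberg-era) result; neither can be used to reconstruct a 1976 argument, and neither is needed. What actually does the work in \cite{Was} is the family of finite-dimensional representations of $\SL(3,\ints)$ coming from its congruence quotients (the same setup the survey recalls before Ozawa's Lemma): property $(T)$ isolates the trivial representation inside $\sigma_p \otimes \bar{\sigma}_p$, and one shows that if an algebra through which all the $\sigma_p$ factor --- such as $\prod_p M_{d_p}$ or ${\cal B}(\ell^2)$, or a $\mathrm{II}_1$ algebra via its trace --- were nuclear, then a certain representation of the algebraic tensor product by a commuting pair would be continuous for the minimal norm, which property $(T)$ forbids. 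Some argument of this kind has to be supplied; without it your proposal assumes precisely the core of the statement.

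Two smaller points in the ``soft'' part also need repair. In case (b), the weak closure of an increasing chain $M_2 \subseteq M_4 \subseteq \cdots$ inside a type $\mathrm{II}_1$ summand is hyperfinite but need not be a factor, hence need not be $R$; you must either pass through the central decomposition to extract $R$, or (simpler, and more in the spirit of the rest of your reduction) split the unit into orthogonal projections $e_n$ with a unital copy of $M_n$ inside $e_n \M e_n$, so that case (b), like case (a), reduces to the injective subalgebra $\prod_n M_n$ and only one model algebra has to be dealt with. And in case (c) the index set should be kept as your infinite $S$ (with unbounded matrix sizes), not silently replaced by $\posints$; this is harmless but should be said. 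The permanence arguments themselves (quotients, ranges of conditional expectations, with the CPAP-style definition of nuclearity used in this survey and its equivalence with the tensor-norm characterization from \cite{Tak}) are fine.
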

\par 
Of course, this means:
\begin{corollary}
Let $\Hilbert$ be a Hilbert space. Then ${\cal B}(\Hilbert)$ is amenable if and only if $\dim \Hilbert < \infty$.
\end{corollary}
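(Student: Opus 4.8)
The plan is to read this off from the two deep results already quoted in this section, Theorem~\ref{nucam} (amenability $=$ nuclearity for $\cstar$-algebras) and Theorem~\ref{simon} (Wassermann's characterization of nuclear von Neumann algebras), together with the elementary remark that $\mathcal{B}(\Hilbert)$ is subhomogeneous exactly when $\dim\Hilbert<\infty$. For the easy implication, suppose $\dim\Hilbert = n <\infty$. Then $\mathcal{B}(\Hilbert)\cong M_n$, which carries the diagonal $\boldsymbol{d} = \frac1n\sum_{j,k} e_{j,k}\tensor e_{k,j}$ exhibited in the Example of Section~1; hence $\mathcal{B}(\Hilbert)$ is amenable. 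This direction needs none of the operator-algebraic machinery of the present section.

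For the converse, suppose $\dim\Hilbert = \infty$. Then $\mathcal{B}(\Hilbert)$ is a von Neumann algebra whose centre is $\comps\,\id_\Hilbert$, i.e. a factor. Assume, for contradiction, that $\mathcal{B}(\Hilbert)$ is amenable. By Theorem~\ref{nucam} it is then nuclear, so by Theorem~\ref{simon} it is subhomogeneous: $\mathcal{B}(\Hilbert)\cong M_{n_1}(\M_1)\oplus\cdots\oplus M_{n_k}(\M_k)$ with each $\M_j$ a commutative von Neumann algebra. The centre of the right-hand side is $\M_1\oplus\cdots\oplus\M_k$; comparing with the trivial centre of $\mathcal{B}(\Hilbert)$ forces $k=1$ and $\M_1 = \comps$, so $\mathcal{B}(\Hilbert)\cong M_{n_1}$. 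But $M_{n_1}$ is finite-dimensional as a vector space, whereas $\mathcal{B}(\Hilbert)$ is not when $\dim\Hilbert=\infty$ --- a contradiction. Hence $\mathcal{B}(\Hilbert)$ is not nuclear, and Theorem~\ref{nucam} gives that it is not amenable.

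I do not expect a genuine obstacle at the level of this corollary: all the weight rests on the cited theorems of Connes--Haagerup (Theorem~\ref{nucam}) and Wassermann (Theorem~\ref{simon}), and the only remaining point is the one-line centre computation identifying when $\mathcal{B}(\Hilbert)$ is subhomogeneous. One could in principle try to avoid Theorem~\ref{simon} and argue directly that nuclearity of $\mathcal{B}(\Hilbert)$ fails for $\dim\Hilbert=\infty$ by testing the defining completely positive approximate factorization against the increasing chain of full matrix subalgebras $M_n\subset\mathcal{B}(\Hilbert)$; but quantifying why no such factorization can survive in the limit is essentially the content of Wassermann's theorem, so invoking Theorem~\ref{simon} is the economical route and the one I would take.
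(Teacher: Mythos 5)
Your proposal is correct and follows exactly the route the paper intends: the paper derives the corollary directly from Theorem~\ref{nucam} and Theorem~\ref{simon} (``Of course, this means\dots''), and your centre computation simply makes explicit the observation that ${\cal B}(\Hilbert)$, being a factor, can only be subhomogeneous when $\dim \Hilbert < \infty$, with the finite-dimensional direction handled by the matrix-algebra diagonal from Section~1, just as in the paper.
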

\par 
In particular, ${\cal B}(\ell^2)$ is not amenable. Since $\ell^2$ is certainly the ``best behaved'' of all $\ell^p$-spaces, one is led to believe that ${\cal B}(\ell^p)$ is not amenable for any $p \in [1,\infty]$. However, there is no hope to adapt the argument we just outlined to any ${\cal B}(\ell^p)$ with $p \neq 2$: both Theorems \ref{nucam} and \ref{simon} rely heavily on the powerful tools provided by the theory of von Neumann algebras, which are not available if $p \neq 2$.

\section{The $\ell^p \oplus \ell^q$ case for $p \neq q$}

Before we start discussing the non-amenability of ${\cal B}(\ell^p)$ for various values of $p$, let's have a look at
${\cal B}(\ell^p \oplus \ell^q)$ for $p \neq q$. On might expect that dealing with ${\cal B}(\ell^p \oplus \ell^q)$ instead of ${\cal B}(\ell^p)$ makes things more complicated. The Banach algebra ${\cal B}(\ell^p \oplus \ell^q)$ is a much more complex object than ${\cal B}(\ell^p)$: for instance, the closed ideal structure of ${\cal B}(\ell^p)$ is as simple as it can get (\cite{GMF}) whereas the situation is considerably more subtle for ${\cal B}(\ell^p \oplus \ell^q)$ (\cite{SSTT}).
\par 
Still, the non-amenability of ${\cal B}(\ell^p \oplus \ell^q)$ for $p \neq q$ is surprisingly easy to obtain. 
\begin{theorem} \label{george} 
Let $p,q \in [1,\infty)$ such that $p \neq q$. Then ${\cal B}(\ell^p \oplus \ell^q)$ is not amenable.
\end{theorem}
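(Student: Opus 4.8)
The plan is to use the disparity between $p$ and $q$ to pin down the structure of the Calkin algebra ${\cal C}(E) := {\cal B}(E)/{\cal K}(E)$ of $E := \ell^p \oplus \ell^q$ and thereby locate inside it a non-zero closed ideal that squares to zero; such an ideal is incompatible with amenability, and since amenability passes to quotients of a Banach algebra by its closed ideals, ${\cal B}(E)$ cannot be amenable. Pleasantly, this route needs nothing about the amenability of ${\cal K}(E)$ itself.

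I would begin by assuming, without loss of generality, that $p < q$, and by viewing an operator on $E$ as a $2 \times 2$ operator matrix whose $(1,1)$-, $(1,2)$-, $(2,1)$- and $(2,2)$-entries lie in ${\cal B}(\ell^p)$, ${\cal B}(\ell^q,\ell^p)$, ${\cal B}(\ell^p,\ell^q)$ and ${\cal B}(\ell^q)$ respectively. The one external ingredient is Pitt's theorem: for $p < q$ every bounded operator from $\ell^q$ to $\ell^p$ is compact, i.e.\ ${\cal B}(\ell^q,\ell^p) = {\cal K}(\ell^q,\ell^p)$. Passing to ${\cal C}(E)$ therefore annihilates the $(1,2)$-entry, and ${\cal C}(E)$ is carried isomorphically onto the algebra of lower-triangular matrices whose diagonal entries lie in ${\cal C}(\ell^p)$ and ${\cal C}(\ell^q)$ and whose $(2,1)$-entry ranges over $N := {\cal B}(\ell^p,\ell^q)/{\cal K}(\ell^p,\ell^q)$.

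Next I would isolate the subspace $\widetilde N \subseteq {\cal C}(E)$ of matrices supported on the $(2,1)$-corner and check three things about it: it is a closed two-sided ideal with $\widetilde N^2 = \{0\}$; it is complemented in ${\cal C}(E)$ (the three matrix blocks split ${\cal C}(E)$ as a topological direct sum, since each block projection is a contraction for the quotient norm once $E$ is given, say, its $\ell^p$-sum norm); and it is non-zero, because the formal inclusion $\ell^p \hookrightarrow \ell^q$ is bounded but not compact, so its image in $N$ is non-zero. Here is precisely where the hypotheses bite: if $p = q$ the $(1,2)$-corner contains the identity and ${\cal C}(E) \cong M_2({\cal C}(\ell^p))$ admits no such ideal, while if $q = \infty$ the conclusion of Pitt's theorem fails.

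Finally I would invoke the standard fact (see, e.g., \cite{LoA}) that in an amenable Banach algebra every complemented closed ideal possesses a bounded approximate identity. Granting for contradiction that ${\cal B}(E)$ is amenable, so is its quotient ${\cal C}(E)$, whence $\widetilde N$ would carry a bounded approximate identity $(e_\lambda)_\lambda$; but then $x = \lim_\lambda e_\lambda x = 0$ for every $x \in \widetilde N$, since $e_\lambda x \in \widetilde N^2 = \{0\}$, forcing $\widetilde N = \{0\}$ — a contradiction. The only genuine insight, as opposed to routine verification or appeal to classical results, is the realization that Pitt's theorem is exactly what renders ${\cal C}(E)$ triangular with a non-trivial square-zero ideal; once that is seen, the argument really is short, which is what makes Theorem~\ref{george} ``surprisingly easy''.
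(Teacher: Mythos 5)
Your proposal is correct and is essentially the paper's own argument: Pitt's theorem triangularizes the Calkin algebra of $\ell^p\oplus\ell^q$, and the non-zero complemented corner ideal with square zero contradicts the hereditary properties of amenability (quotients, complemented closed ideals, bounded approximate identities). The only difference is the cosmetic choice $p<q$ rather than $p>q$, which makes your Calkin algebra lower rather than upper triangular.
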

\par 
We present the proof---which is attributed to G.\ A.\ Willis in \cite{Gro}---because it is both short and elegant and because it can be adapted to other Banach algebras with a block matrix structure (see \cite{DR}, for instance). The proof uses four ingredients, the first three of which are elementary properties of amenable Banach algebras:
\begin{itemize}
\item Every quotient of an amenable Banach algebra is again amenable.
\item A closed ideal of an amenable Banach algebra is amenable if and only if it is weakly complemented; in particular, complemented closed ideals of amenable Banach algebras are amenable.
\item Amenable Banach algebras have bounded approximate identities. 
\item \textbf{Pitt's Theorem.} \textit{Let $1 \leq q < p < \infty$. Then ${\cal B}(\ell^p,\ell^q) = {\cal K}(\ell^p,\ell^q)$ holds}  (\cite[Proposition 2.c.3]{LT}).
\end{itemize}
\begin{proof}
The Banach algebra ${\cal B}(\ell^p \oplus \ell^q)$ has a block matrix structure
\[
  {\cal B}(\ell^p \oplus \ell^q) = \begin{bmatrix} 
  {\cal B}(\ell^p) & {\cal B}(\ell^q,\ell^p) \\ {\cal B}(\ell^p,\ell^q) & {\cal B}(\ell^q) 
  \end{bmatrix}
\]
as has its closed ideal
\[
  {\cal K}(\ell^p \oplus \ell^q) = \begin{bmatrix} 
  {\cal K}(\ell^p) & {\cal K}(\ell^q,\ell^p) \\ {\cal K}(\ell^p,\ell^q) & {\cal K}(\ell^q)
  \end{bmatrix}.
\]
Without loss of generality, suppose that $p > q$. Then Pitt's Theorem yields
\[
  {\cal B}(\ell^p \oplus \ell^q) = \begin{bmatrix} 
  {\cal B}(\ell^p) & {\cal B}(\ell^q,\ell^p) \\ {\cal K}(\ell^p,\ell^q) & {\cal B}(\ell^q) 
  \end{bmatrix}.
\]
For any Banach space $E$, we write ${\cal C}(E)$ to denote its \emph{Calkin algebra}, i.e., the quotient ${\cal B}(E)/{\cal K}(E)$. In view of the foregoing, ${\cal C}(\ell^p \oplus \ell^q)$ has an upper diagonal block matrix structure, namely
\[
  {\cal C}(\ell^p \oplus \ell^q) = \begin{bmatrix} 
  {\cal C}(\ell^p) & \ast \\ 0 & {\cal C}(\ell^q)
  \end{bmatrix}.
\]
\par
Assume now that ${\cal B}(\ell^p \oplus \ell^q)$ is amenable. Then its quotient ${\cal C}(\ell^p \oplus \ell^q)$ is also amenable. Set 
\[
  I := \begin{bmatrix} 0 & \ast \\ 0 & 0 \end{bmatrix}.
\]
Then $I$ is a non-zero ideal of ${\cal C}(\ell^p \oplus \ell^q)$---because ${\cal K}(\ell^q,\ell^p) \subsetneq {\cal B}(\ell^q,\ell^p)$---, which is trivially complemented. Consequently, $I$ is an amenable Banach algebra in its own right and thus has a bounded approximate identity. Since $I^2 = \{ 0 \}$, however, this is impossible.
\end{proof}
\par 
We note that Theorem \ref{george} remains true if $\ell^p$ is replaced by $c_0$.

\section{Non-amenability of ${\cal B}(\ell^p)$ for $p=1,2,\infty$ and Ozawa's Lem\-ma}

The first to prove that ${\cal B}(\ell^p)$ was not amenable for \emph{any} $p$ other than $2$ was C.\ J.\ Read in \cite{Rea}. His proof relied on the theory of random hypergraphs. Even though the publication year of \cite{Rea} is 2006, Read had obtained his proof years earlier and, for instance, had presented it at conference in Canberra as early as January 2001. 
\par 
While Read's proof was awaiting publication, other mathematicians studied his proof, and in \cite{Pis}, G.\ Pisier presented a simplification of Read's argument---published before Read's original proof---by replacing the random hypergraphs with expanders. Eventually, N.\ Ozawa simplified Pisier's argument even further (\cite{Oza}). Expanders still play a pivotal r\^ole, but they do so in the background: one can fully understand the proof of \cite[Theorem 1.1]{Oza} without knowing what an expander is.
\par 
Ozawa's approach is remarkable because it not only gives a surprisingly simple proof for the non-amenability of ${\cal B}(\ell^1)$: it provides a unified argument for the non-amenability of several Banach algebras (\cite[Theorem 1.1]{Oza}):
\begin{theorem} \label{takthm}
The following Banach algebras are not amenable:
\begin{itemize}
\item ${\cal B}(\ell^p)$ for $p \in \{1,2,\infty\}$;
\item $\text{$\ell^\infty$-}\bigoplus_{n=1}^\infty {\cal B}(\ell^p_n)$ for $p \in [1,\infty]$;
\item the Banach algebras of all regular operators on $\ell^p$ for $p \in [1,\infty]$.
\end{itemize}
\end{theorem}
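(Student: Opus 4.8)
The plan is to route all three assertions through a single rigidity statement --- I will refer to it as Ozawa's Lemma --- whose proof rests on expander graphs but whose statement mentions none. In each case one argues by contradiction. If $\A$ is amenable, the definition supplies an approximate diagonal bounded by some $C<\infty$; equivalently, for every finite $F\subseteq\A$ and every $\varepsilon>0$ there is $\boldsymbol d\in\A\Tensor\A$ with $\|\boldsymbol d\|\le C$, $\|a\cdot\boldsymbol d-\boldsymbol d\cdot a\|\le\varepsilon$ and $\|a\Delta\boldsymbol d-a\|\le\varepsilon$ for all $a\in F$. The leverage comes from the fact that $\boldsymbol d$ is a \emph{single} bounded element of the projective tensor product, so it cannot be ``large'' in infinitely many independent directions at once. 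One makes those directions explicit by exhibiting, inside each of the three algebras, full matrix algebras $M_n$ with $n\to\infty$ --- $M_n$ carrying the operator norm it inherits as ${\cal B}(\ell^p_n)$, as the algebra of regular operators on $\ell^p_n$, or as a summand of the $\ell^\infty$-sum --- each carrying permutation matrices $\pi_1^{(n)},\dots,\pi_k^{(n)}\in M_n$ drawn from a fixed bounded-degree expander family (equivalently, from the finite quotients of a fixed group with Kazhdan's property $(T)$), arranged so that these permutations act isometrically and the compression $\A\Tensor\A\to M_n\Tensor M_n$ is contractive. Restricting $\boldsymbol d$ this way then yields, for every $n$ at once, an element of $M_n\Tensor M_n$ of norm $\le C$ that is $\varepsilon$-central for $\pi_1^{(n)},\dots,\pi_k^{(n)}$ and whose $\Delta$-image lies within $\varepsilon$ of the identity $1_n$.

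Ozawa's Lemma then states that this is impossible once $n$ is large: there is no uniformly bounded family $(\boldsymbol e_n)_n$ in $M_n\Tensor M_n$ that is asymptotically central for the expander permutations $\pi_1^{(n)},\dots,\pi_k^{(n)}$ and satisfies $\Delta\boldsymbol e_n\to 1_n$. The mechanism is the spectral gap of the averaging operator $\frac1k\sum_i\pi_i^{(n)}$: near-centrality for the $\pi_i^{(n)}$ forces $\boldsymbol e_n$, up to an error controlled by the gap, to be concentrated ``off the diagonal'', which in turn keeps the norm of $\Delta\boldsymbol e_n$ bounded away from that of the full identity $1_n$ as $n$ grows. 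Ozawa arranges this estimate so that the expanders appear only inside its proof; granting the lemma, letting $n\to\infty$ closes the contradiction, and the non-amenability of all three families is obtained simultaneously.

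The principal obstacle --- and the point at which $p$ intervenes --- is the extraction step: to place the matrix data under the hypotheses of Ozawa's Lemma one must control the relevant norm, computed in ${\cal B}(\ell^p_n)$, of explicit matrices assembled from the $\pi_i^{(n)}$ and from matrix units, and one must check that compressing by a coordinate projection (which is contractive on $\ell^p$ and compatible with the projective tensor product) costs only a controlled amount in the approximate-diagonal identities --- in particular that $\Delta\boldsymbol e_n\approx 1_n$ survives, which requires keeping the projection inside $F$ and some care with the bimodule identities. Such norm estimates are feasible only at $p=1$ (where the operator norm of a matrix is the largest $\ell^1$-norm of a column), $p=\infty$ (the transpose of this), and $p=2$ (where Hilbert-space orthogonality is available); for the remaining exponents the operator norm on $\ell^p_n$ is genuinely intractable, which is exactly why this route yields ${\cal B}(\ell^p)$ only for $p\in\{1,2,\infty\}$. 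The regular-operator version escapes the difficulty because its norm $\|T\|_r=\|\,|T|\,\|$ is lattice-theoretic and the matrices in play have controllable moduli, so the estimates become combinatorial ones valid at every exponent; the $\ell^\infty$-sum version is then reached for the full range $p\in[1,\infty]$ along similar lines.
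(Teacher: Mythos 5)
Your proposal reproduces the coarse architecture of Ozawa's argument as outlined in the paper (amenability gives a bounded approximate diagonal; compress it onto blocks indexed by property-$(T)$/expander data; contradict a rigidity lemma whose statement hides the expanders), but there is a genuine gap at precisely the step you call ``extraction'', and you have misplaced the role of $p$. Compression by coordinate projections is contractive into $M_n\Tensor M_n$ only if $M_n$ carries the inherited ${\cal B}(\ell^p_n)$-norm, and the explicit matrices you point to (expander permutations, sign matrices, matrix units, coordinate projections) have norm $1$ on \emph{every} $\ell^p_n$; if controlling their norms were the only issue, your argument would prove non-amenability of ${\cal B}(\ell^p)$ for all $p$, which this method demonstrably does not do. The real bottleneck is transferring the compressed legs $a_k,b_k$ of the approximate diagonal---arbitrary operators, not explicit matrices---from the $\ell^p$-operator-norm setting into elements of $\ell^2(\Lambda_q)\Tensor\ell^2(\Lambda_q)$ whose projective (trace-class) norm is controlled by $|\Lambda_q|$. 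This is exactly the inequality $\sum_{n}\|Se_n\|_{\ell^2_N}\|Te_n^\ast\|_{\ell^2_N}\le N\|S\|\,\|T\|$ for $S\in{\cal B}(\ell^p,\ell^p_N)$, $T\in{\cal B}(\ell^{p'},\ell^{p'}_N)$ (Ozawa's Lemma 2.1), which holds for $p=1,2,\infty$ and fails for general $p$; your sketch contains no substitute for it, and without it the data you feed to the lemma have no norm control in the Hilbert-space tensor product where the lemma lives. Relatedly, your statement and ``mechanism'' for the lemma are off: the actual lemma concerns approximately $\pi_q\otimes\pi_q$-invariant elements of $\ell^2(\Lambda_q)\Tensor\ell^2(\Lambda_q)$ with a rank bound $r$ uniform in the prime $q$, and its proof needs, besides the spectral gap from property $(T)$, the uniform continuity of the noncommutative Mazur map to pass from the trace-class normalization to Hilbert--Schmidt vectors; your heuristic that near-centrality forces concentration ``off the diagonal'' and keeps $\|\Delta\boldsymbol e_n\|$ away from $\|1_n\|$ is not a proof and does not even fix which norms are meant ($\|1_n\|=1$ in operator norm for all $n$).

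Two further points. To contradict the lemma you need \emph{finitely} many terms $a_1,b_1,\ldots,a_r,b_r$ with $\sum_k a_kb_k=e$ and $\epsilon$-centrality, with the same $r$ for all blocks simultaneously; this is not the raw definition of amenability but the finite reduction (Lemma \ref{ilem}, Ozawa's Definition 1.2), and the uniformity across infinitely many blocks is secured by taking the finite set $F$ to consist of the block-diagonal elements $(\pi_q(g_j))_q$ inside the big algebra---you gesture at ``for every $n$ at once'' but never arrange it. Finally, you give no argument for why the $\ell^\infty$-sum of the ${\cal B}(\ell^p_n)$ and the regular-operator algebras succeed for \emph{all} $p\in[1,\infty]$ while ${\cal B}(\ell^p)$ only works for $p=1,2,\infty$: in Ozawa's proof this is because suitable analogues of the displayed inequality do hold in those settings (for the regular norm, and for operators into finite-dimensional $\ell^2_N$), and identifying those inequalities is the substance of the front end; ``the estimates become combinatorial'' is not a replacement for them.
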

\par 
We note that, even though it is not explicitly stated in \cite{Oza}, the proof of \cite[Theorem 1.1]{Oza} also establishes the non-amenability of ${\cal B}(c_0)$ as well as of $\text{$\ell^\infty$-}\bigoplus_{n=1}^\infty {\cal B}(\ell^{p_n}_n)$ where $( p_n )_{n=1}^\infty$ is \emph{any} sequence in $[1,\infty]$.
\par 
The proof of \cite[Theorem 1.1]{Oza} is rather monolithic, but still, it has a ``front end'' and a ``back end'', which can be separated from each other, and we would like to take a closer look at the back end.
\par 
Let $\mathbb P$ denote the set of all prime numbers, and, for each $p \in \mathbb P$, write $\Lambda_p$ for the projective plane over the finite field $\ints / p \ints$. The group $\SL(3,\ints)$ has \emph{Kazhdan's property $(T)$} (\cite[Theorem 4.2.5]{BHV})---and thus, in particular, is finitely generated, by $g_1, \ldots, g_m$, say---and acts on $\Lambda_p$ through matrix multiplication. This group action, in turn, induces a unitary representation $\pi_p \!: \SL(3,\ints) \to {\cal B}(\ell^2(\Lambda_p))$. We choose a subset $S_p$ of $\Lambda_p$ with $|S_p| = \frac{|\Lambda_p|-1}{2}$ and define a unitary $\pi_p(g_{m+1}) \in {\cal B}(\ell^2(\Lambda_p))$ via
\[
 \pi_p(g_{m+1}) e_\lambda = \left\{ \begin{array}{rl} e_\lambda, & \lambda \in S_p, \\ 
                            -e_\lambda, & \lambda \notin S_p. \end{array} \right.
\]
(We would like to stress that the notation $\pi_p(g_{m+1})$ is purely symbolic and mainly for our convenience: $\pi_p(g_{m+1})$ does \emph{not} lie in $\pi_p(\SL(3,\ints))$.)
\par 
With this setup in place, we can formulate the following:
\begin{taklem}
It is impossible to find, for each $\epsilon > 0$, a number $r \in \posints$ with the following property: for each $p \in \mathbb P$, there are $\xi_{1,p}, \eta_{1,p}, \ldots, \xi_{r,p}, \eta_{r,p} \in \ell^2(\Lambda_p)$ such that $\sum_{k=1}^r \xi_{k,p} \tensor \eta_{k,p} \neq 0$ and
\begin{multline*}
  \left\| \sum_{k=1}^r \xi_{j,p} \tensor \eta_{k,p} - (\pi_p(g_j) \tensor \pi_p(g_j))(\xi_{k,p} \tensor \eta_{k,p}) 
  \right\|_{\ell^2(\Lambda_p) \Tensor \ell^2(\Lambda_p)} \\ \leq \epsilon \left\| \sum_{k=1}^r \xi_{k,p} \tensor \eta_{k,p}
  \right\|_{\ell^2(\Lambda_p) \Tensor \ell^2(\Lambda_p)} \qquad (j=1, \ldots, m+1). 
\end{multline*}
\end{taklem}
\par 
Ozawa's Lemma is not implicitly formulated in \cite{Oza}, but is proven there as part of \cite[Theorem 1.1]{Oza}. It's proof relies mainly on two ingredients:
\begin{itemize}
\item $\SL(3,\ints)$ has Kazhdan's property $(T)$, and
\item the so-called \emph{non-commutative Mazur} map is uniformly continuous (\cite[Theorem 4.1]{Oza}).
\end{itemize}
\par 
It is remarkable that the statement of Ozawa's Lemma doesn't make any explicit reference to Banach algebras at all. In \cite{Oza}, the connection with the various Banach algebras mentioned in Theorem \ref{takthm} is established through the inequalities of \cite[Lemma 2.1]{Oza}, in particular by
\[
  \sum_{n=1}^\infty \| S e_n \|_{\ell^2_N} \| T e_n^\ast \|_{\ell^2_N} \leq N \| S \| \| T \|
  \qquad (N \in \posints, \, S \in {\cal B}(\ell^p,\ell^p_N), \, T \in {\cal B}(\ell^{p'},\ell^{p'}_N))
\]
for $p =1,2,\infty$, which is no longer true for general $p \in [1,\infty]$ (\cite[Remark 2.2]{Oza}).

\section{Partial results for $p \in (1,\infty) \setminus \{ 2 \}$}

In \cite{DR}, M.\ Daws and the author studied the amenability problem for ${\cal B}(\ell^p)$ for $p \in (1,\infty) \setminus \{ 2 \}$ with the goal of proving that these algebras were not amenable. 
\par 
The starting point of their investigation was \cite[Theorem 4.1]{DR}:
\begin{theorem} \label{matti}
Let $E$ be a Banach space, and let $p \in [1,\infty)$. Then the following are equivalent:
\begin{enumerate}[\rm (i)]
\item ${\cal B}(\ell^p(E))$ is amenable;
\item $\ell^\infty({\cal B}(\ell^p(E)))$ is amenable.
\end{enumerate}
\end{theorem}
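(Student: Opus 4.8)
The plan is to prove the two implications separately; (ii) $\Longrightarrow$ (i) is essentially formal, and the whole weight of the theorem is in (i) $\Longrightarrow$ (ii). For (ii) $\Longrightarrow$ (i) I would observe that evaluation at the first coordinate, $(T_n)_n \mapsto T_1$, is a norm-decreasing \emph{surjective} algebra homomorphism $\ell^\infty({\cal B}(\ell^p(E))) \to {\cal B}(\ell^p(E))$, so that ${\cal B}(\ell^p(E))$ is a quotient of $\ell^\infty({\cal B}(\ell^p(E)))$; since quotients of amenable Banach algebras are amenable, (i) follows at once.

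For (i) $\Longrightarrow$ (ii) the structural input is that $\ell^p(E)$ absorbs countable $\ell^p$-sums of itself. Fixing a bijection $\posints \times \posints \cong \posints$ gives an isometric isomorphism $\ell^p(E) \cong \ell^p(\posints, \ell^p(E))$; writing $F := \ell^p(E)$ and $G := \ell^p(\posints, F) = \bigoplus_n^{\ell^p} F_n$ with each $F_n = F$, one gets ${\cal B}(\ell^p(E)) \cong {\cal B}(G)$. A bounded sequence $(T_n)_n$ of operators on $F$ yields the block-diagonal operator $\bigoplus_n T_n \in {\cal B}(G)$, of norm $\sup_n \|T_n\|$ (this is where $p < \infty$ matters), so $\ell^\infty({\cal B}(\ell^p(E)))$ sits \emph{isometrically} inside ${\cal B}(G)$ as the closed block-diagonal subalgebra ${\cal D}$, and ${\cal D}$ contains the identity $\id_G$. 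Compression to the diagonal blocks, $S = (S_{i,j})_{i,j} \mapsto (S_{n,n})_n$, is well defined and contractive since $\|S_{n,n}\| \le \|S\|$, and the identities $P(DS) = D\,P(S)$ and $P(SD) = P(S)\,D$ for $D \in {\cal D}$ show that the resulting projection $P \colon {\cal B}(G) \to {\cal D}$ is a ${\cal D}$-bimodule homomorphism. Thus ${\cal D}$ is realized as a complemented subalgebra of the amenable algebra ${\cal B}(G)$ whose complementing projection is a bimodule map.

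The remaining task is to transfer amenability from ${\cal B}(G)$ to ${\cal D}$, and I expect this to be the main obstacle. Given a bounded approximate diagonal $( \boldsymbol{d}_\alpha )_\alpha$ of ${\cal B}(G)$, the natural candidate for ${\cal D}$ is $\boldsymbol{e}_\alpha := (P \otimes P)(\boldsymbol{d}_\alpha) \in {\cal D} \Tensor {\cal D}$: it is bounded, and because $P$ is a bimodule homomorphism one has $D \cdot \boldsymbol{e}_\alpha - \boldsymbol{e}_\alpha \cdot D = (P \otimes P)(D \cdot \boldsymbol{d}_\alpha - \boldsymbol{d}_\alpha \cdot D) \to 0$ for every $D \in {\cal D}$, so the commutator condition comes for free. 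The difficulty is the modular condition $D\,\Delta \boldsymbol{e}_\alpha \to D$, equivalently (as ${\cal D}$ is unital) $\Delta \boldsymbol{e}_\alpha \to \id_G$ in norm: since $P$ is \emph{not} multiplicative, $\Delta \circ (P \otimes P)$ differs from $P \circ \Delta$, the discrepancy being carried by terms of the form $\sum_n E_{n,n} S (\id_G - E_{n,n}) T E_{n,n}$ in the block matrix units $E_{i,j} \in {\cal B}(G)$, and there is no reason for these to vanish in the limit. The remedy, which is the technical heart of the argument, is to use the block matrix units $E_{i,j}$ together with the fact that $G$ is \emph{itself} a countable $\ell^p$-sum of copies of $G$: transport an approximate diagonal of ${\cal B}(G)$ along the isometries implementing $G \cong \ell^p(\posints, G)$ into the individual blocks and amalgamate these copies, the delicate point being to keep the amalgamated net bounded (a naive coordinatewise amalgamation has norm growing with the number of blocks, which is exactly why $\ell^\infty$-sums of amenable algebras need not be amenable) while ensuring that its image under $\Delta$ still converges to $\id_G$. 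Once a genuine approximate diagonal for ${\cal D}$ has been manufactured, ${\cal D} \cong \ell^\infty({\cal B}(\ell^p(E)))$ is amenable.

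Finally, I would stress the purpose of the statement: Theorem \ref{matti} reduces the (non-)amenability of ${\cal B}(\ell^p)$ to that of the $\ell^\infty$-sum $\ell^\infty({\cal B}(\ell^p))$, and the latter, assembled from the finite-dimensional corners ${\cal B}(\ell^p_n)$ in a way compatible with compressions, is precisely the kind of algebra on which the expander/property-$(T)$ input behind Ozawa's Lemma can be brought to bear.
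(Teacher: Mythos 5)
Your overall frame matches the paper's own sketch---(ii) $\Rightarrow$ (i) by passing to a quotient, and for (i) $\Rightarrow$ (ii) the identification $\ell^p(E) \cong \ell^p(\posints,\ell^p(E))$ that realizes $\ell^\infty({\cal B}(\ell^p(E)))$ as the block-diagonal subalgebra of ${\cal B}(\ell^p(E))$---and you correctly diagnose why the naive candidate $(P \tensor P)(\boldsymbol{d}_\alpha)$ fails: the compression $P$ onto the diagonal is a bimodule map but not multiplicative, so asymptotic centrality survives while the condition $\Delta \boldsymbol{e}_\alpha \to \id$ does not. But at exactly that point your argument stops. ``Transport an approximate diagonal into the individual blocks and amalgamate'' is the step you concede you cannot control, and the obstacles are real: a coordinatewise choice of (transported) approximate diagonals does not even produce an element of $\ell^\infty({\cal B})\Tensor \ell^\infty({\cal B})$, since that space is not $\ell^\infty({\cal B}\Tensor{\cal B})$, and the approximate centrality of a single $\boldsymbol{d}_\alpha$ against the block projections is not uniform in the block index. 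So the technical heart of (i) $\Rightarrow$ (ii) is missing, not merely deferred.

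The gap can be closed, and the device is simpler than the amalgamation you envisage: split the compression between the two tensor legs instead of applying it to both. Let $V_n, W_n \in {\cal B}(\ell^p(E))$ be the isometries and coordinate maps coming from $\ell^p(E) \cong \ell^p(\posints,\ell^p(E))$, so that $W_n V_n = \id$, the block-diagonal embedding is $\theta((T_n)_n) = \sum_n V_n T_n W_n$ (here $p<\infty$ is used), and $W_n \theta(T) = T_n W_n$, $\theta(T) V_n = V_n T_n$. Define contractions $\gamma_1, \gamma_2 \colon {\cal B}(\ell^p(E)) \to \ell^\infty({\cal B}(\ell^p(E)))$ by $\gamma_1(a) := (W_n a)_n$ and $\gamma_2(b) := (b V_n)_n$. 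Then $\gamma_1$ is a left and $\gamma_2$ a right module map over $\theta$, and, crucially, $\gamma_1(a)\gamma_2(b) = (W_n a b V_n)_n$, i.e.\ $\Delta \circ (\gamma_1 \Tensor \gamma_2) = \gamma \circ \Delta$ where $\gamma(c) := (W_n c V_n)_n$. Consequently $\boldsymbol{D}_\alpha := (\gamma_1 \Tensor \gamma_2)(\boldsymbol{d}_\alpha)$ is bounded, satisfies
\[
  T \cdot \boldsymbol{D}_\alpha - \boldsymbol{D}_\alpha \cdot T
  = (\gamma_1 \Tensor \gamma_2)\bigl(\theta(T)\cdot \boldsymbol{d}_\alpha - \boldsymbol{d}_\alpha \cdot \theta(T)\bigr) \to 0
  \qquad (T \in \ell^\infty({\cal B}(\ell^p(E)))),
\]
and $\Delta \boldsymbol{D}_\alpha = \gamma(\Delta \boldsymbol{d}_\alpha) \to \gamma(\id) = (W_n V_n)_n = \mathbf{1}$, because ${\cal B}(\ell^p(E))$ is unital and hence $\Delta \boldsymbol{d}_\alpha \to \id$ in norm. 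Thus $(\boldsymbol{D}_\alpha)_\alpha$ is a bounded approximate diagonal for $\ell^\infty({\cal B}(\ell^p(E)))$, which is exactly the construction your proposal needs but does not supply; without it (or an equivalent argument), your proof of (i) $\Rightarrow$ (ii) is incomplete.
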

\par 
The idea of the proof of (i) $\Longrightarrow$ (ii) is to view $\ell^\infty({\cal B}(\ell^p(E)))$ as block diagonal matrices on $\ell^p(\ell^p(E)) \cong \ell^p(E)$ and then construct an approximate diagonal for these block diagonal matrices from an approximate diagonal for ${\cal B}(\ell^p(E))$.
\par 
For $E = \comps$, Theorem \ref{matti} means that ${\cal B}(\ell^p)$ is amenable if and only if $\ell^\infty({\cal B}(\ell^p))$ is amenable, which is comforting from a philosophical point of view if one believes that ${\cal B}(\ell^p)$ is not amenable: amenable Banach algebras ought to be ``small'' and $\ell^\infty({\cal B}(\ell^p))$ feels even ``larger'' than  $\text{$\ell^\infty$-}\bigoplus_{n=1}^\infty {\cal B}(\ell^p_n)$, which is known to be not amenable. But of course, that's no proof.
\par 
A different angle, however, turns out to be more promising. Since $\ell^\infty({\cal K}(\ell^p))$ is a closed ideal in $\ell^\infty({\cal B}(\ell^p))$ with a bounded approximate identity, Theorem \ref{matti} yields immediately:
\begin{corollary} \label{matticor}
Let $p \in [1,\infty)$ be such that ${\cal B}(\ell^p)$ is amenable. Then $\ell^\infty({\cal K}(\ell^p))$ is amenable.
\end{corollary}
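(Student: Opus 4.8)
The plan is to read the corollary off Theorem~\ref{matti} together with the elementary hereditary facts recalled before the proof of Theorem~\ref{george}. There are three steps: first promote the hypothesis to the statement that $\ell^\infty({\cal B}(\ell^p))$ is amenable; then exhibit $\ell^\infty({\cal K}(\ell^p))$ as a closed two-sided ideal of $\ell^\infty({\cal B}(\ell^p))$ that carries a bounded approximate identity; and finally invoke that a closed ideal with a bounded approximate identity inside an amenable Banach algebra is again amenable.

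Step one: apply Theorem~\ref{matti} with $E = \comps$, so that $\ell^p(E) = \ell^p$. Since ${\cal B}(\ell^p)$ is amenable by hypothesis, condition~(i) of that theorem holds, and hence so does condition~(ii), i.e.\ $\ell^\infty({\cal B}(\ell^p))$ is amenable. Step two: ${\cal K}(\ell^p)$ is a norm-closed two-sided ideal of ${\cal B}(\ell^p)$, and coordinatewise $\ell^\infty$-sums preserve both properties, so $\ell^\infty({\cal K}(\ell^p))$ is a two-sided ideal of $\ell^\infty({\cal B}(\ell^p))$ --- norm-closed because a uniform limit of bounded sequences of compact operators has a compact entry in each coordinate. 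That this ideal has a bounded approximate identity is then verified in two moves. First, ${\cal K}(\ell^p)$ itself has a two-sided bounded approximate identity: for $1 < p < \infty$ one may take the finite-rank truncation projections $P_N = \sum_{n=1}^N e_n \tensor e_n^\ast$, since $P_N \to \id_{\ell^p}$ strongly, whence $P_N T \to T$ in norm for every compact $T$ (the convergence being uniform on the relatively compact image of the unit ball), and likewise $P_N^\ast \to \id_{\ell^{p'}}$ strongly, whence $T P_N \to T$ in norm after passing to adjoints; for $p = 1$ one instead uses that ${\cal K}(\ell^1)$ is amenable --- because $\ell^1$ has property $(\mathbb{A})$ --- and that amenable Banach algebras have bounded approximate identities. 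Second, an $\ell^\infty$-direct sum inherits a bounded approximate identity, with the same bound, from its summands: given finitely many elements of $\ell^\infty({\cal K}(\ell^p))$ and $\epsilon > 0$, in each coordinate select a single term of the bounded approximate identity of ${\cal K}(\ell^p)$ that simultaneously $\epsilon$-fixes the finitely many compact operators occurring there, and assemble these terms coordinatewise; the result is an element of $\ell^\infty({\cal K}(\ell^p))$ of controlled norm which $\epsilon$-fixes all the prescribed elements.

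Step three: $\ell^\infty({\cal K}(\ell^p))$ is now a closed two-sided ideal, possessing a bounded approximate identity, inside the amenable Banach algebra $\ell^\infty({\cal B}(\ell^p))$; by the standard hereditary fact (for a closed ideal of an amenable Banach algebra, possessing a bounded approximate identity --- equivalently, being weakly complemented --- implies amenability) it is itself amenable, which is the assertion.

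The only point that is not purely formal is the existence of a bounded approximate identity for $\ell^\infty({\cal K}(\ell^p))$: one must both produce a genuinely two-sided bounded approximate identity for ${\cal K}(\ell^p)$ (transparent for $1 < p < \infty$, a little less so for $p = 1$) and note that this property survives the passage to an $\ell^\infty$-sum. The latter deserves to be spelled out, since amenability itself is famously \emph{not} inherited by $\ell^\infty$-sums --- as the non-amenability of $\text{$\ell^\infty$-}\bigoplus_{n=1}^\infty {\cal B}(\ell^p_n)$ shows --- so it is essential that here it is the milder property, a bounded approximate identity, that propagates.
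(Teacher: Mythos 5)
Your proposal is correct and is essentially the paper's own argument: apply Theorem~\ref{matti} with $E=\comps$ and then use that $\ell^\infty({\cal K}(\ell^p))$ is a closed ideal of $\ell^\infty({\cal B}(\ell^p))$ with a bounded approximate identity, hence amenable. You merely spell out the details (the bounded approximate identity for ${\cal K}(\ell^p)$, its passage to the $\ell^\infty$-sum, and the $p=1$ case via property $(\mathbb{A})$) that the paper treats as immediate.
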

\par 
In \cite{GJW}, Gr{\o}nb{\ae}k, Johnson, and Willis showed that ${\cal K}(E)$ is amenable whenever $E$ is an ${\cal L}^p$-space: these spaces, which were introduced in \cite{LP} and further studied in \cite{LR}, are those that ``look locally like $\ell^p$''. The precise definition is as follows:
\begin{definition}
Let $p \in [1,\infty]$ and let $\lambda \geq 1$. A Banach space $E$ is called an \emph{${\cal L}_\lambda^p$-space} if, for every finite-dimensional subspace $X$ of $E$, there is a finite-dimensional subspace $Y$ of $E$ containing $X$ with $d(Y,\ell^p_{\dim Y}) \leq \lambda$, were $d$ is the Banach--Mazur distance of Banach spaces. We call $E$ an \emph{${\cal L}^p$-space} if it is an ${\cal L}_\lambda^p$-space for some $\lambda \geq 1$.  
\end{definition}
\begin{example} \label{Lpex}
Every Banach space isomorphic to an $L^p$-space---in the usual measure theoretic sense---is an ${\cal L}^p$-space. However, if $p \in (1,\infty) \setminus \{ 2 \}$, then $\ell^p(\ell^2)$ and $\ell^2 \oplus \ell^p$ are ${\cal L}^p$-spaces, but \emph{not} isomorphic to $L^p$-spaces (\cite[Example 8.2]{LP}).
\end{example}
\par 
Since all ${\cal L}^p$-spaces ``look locally like $\ell^p$'', all ${\cal L}^p$-spaces ``look locally alike''. This has as a consequence that, whenever we have two infinite-dimensional ${\cal L}^p$-spaces $E$ and $F$, then every finite rank operator on $F$ factors through $E$ \emph{such that we can keep the norms of the factors under control}. To be precise (\cite[Lemma 4.2]{DR}):
\begin{lemma} \label{faclem}
Let $p \in [1,\infty]$, and let $E$ and $F$ be ${\cal L}^p$-spaces with $\dim E = \infty$. Then there is $C \geq 0$ such that, for each $T \in {\cal F}(F)$, there are $S \in {\cal F}(F,E)$ and $R \in {\cal F}(E,F)$ with $\| R \| \| S \| \leq C \| T \|$ and $RS = T$.
\end{lemma}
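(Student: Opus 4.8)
The plan is to reduce the statement to a single local fact about infinite-dimensional ${\cal L}^p$-spaces, and then to assemble the factorisation from it. The local fact is: if $\dim E = \infty$ and $E$ is an ${\cal L}_\mu^p$-space, then $\ell^p_n$ embeds into $E$ uniformly complementedly, i.e.\ there are operators $a_n \colon \ell^p_n \to E$ and $b_n \colon E \to \ell^p_n$ with $b_n a_n = \id_{\ell^p_n}$ and $C_0 := \sup_n \|a_n\|\,\|b_n\| < \infty$. Granting this, fix $T \in {\cal F}(F)$, $T \neq 0$, and let $Y_0 \subseteq F$ be its (finite-dimensional) range. As $F$ is an ${\cal L}_\lambda^p$-space for some $\lambda \geq 1$, choose a finite-dimensional $Y$ with $Y_0 \subseteq Y \subseteq F$ and $d(Y,\ell^p_n) \leq \lambda$ for $n := \dim Y$, together with an isomorphism $u \colon Y \to \ell^p_n$ satisfying $\|u\|\,\|u^{-1}\| \leq \lambda$. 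Writing $\tilde{T} \colon F \to Y$ for $T$ with range corestricted to $Y$ (so $\|\tilde{T}\| = \|T\|$, since $Y$ carries the subspace norm) and $\iota_Y \colon Y \hookrightarrow F$ for the inclusion, we have
\[
  T = \iota_Y \tilde{T} = (\iota_Y u^{-1})(u\tilde{T}),
\]
so with $A := \iota_Y u^{-1} \colon \ell^p_n \to F$ and $B := u\tilde{T} \colon F \to \ell^p_n$ we obtain $T = AB$ and $\|A\|\,\|B\| \leq \|u^{-1}\|\,\|u\|\,\|T\| \leq \lambda\|T\|$.

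Now splice in the factorisation $\id_{\ell^p_n} = b_n a_n$ of the identity through $E$:
\[
  T = A\,(b_n a_n)\,B = (A b_n)(a_n B) =: RS ,
\]
where $R := A b_n \in {\cal B}(E,F)$ and $S := a_n B \in {\cal B}(F,E)$. Then $RS = T$; both factors have finite rank, since $S$ factors through $\ell^p_n$ and $R$ has range inside the finite-dimensional space $Y$; and
\[
  \|R\|\,\|S\| \leq \|A\|\,\|B\|\,\|a_n\|\,\|b_n\| \leq \lambda\, C_0\, \|T\| .
\]
Hence $C := \lambda\, C_0$ does the job (the case $T = 0$ being trivial).

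It remains to prove the local fact, and this is precisely where the hypothesis $\dim E = \infty$ enters in an essential way, since a finite-dimensional $E$ cannot contain $\ell^p_n$ for large $n$. The easy half: because $\dim E = \infty$, the ${\cal L}_\mu^p$-property applied to a finite-dimensional subspace of dimension $\geq n$ yields $Z \subseteq E$ with $m := \dim Z \geq n$ and $d(Z,\ell^p_m) \leq \mu$; pulling back, through an isomorphism $Z \to \ell^p_m$ of distortion $\leq \mu$, the span of the first $n$ coordinate vectors of $\ell^p_m$ together with the corresponding norm-one coordinate projection exhibits a subspace of $E$ that is $\mu$-isomorphic to $\ell^p_n$ and $\mu$-complemented \emph{in $Z$}. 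The substantial half is to upgrade this to a projection of \emph{all of $E$} onto that subspace, with norm bounded in terms of $\mu$ and $p$ alone; I expect this to be the main obstacle to a self-contained account. It is supplied by the structure theory of ${\cal L}^p$-spaces (Lindenstrauss--Rosenthal): an ${\cal L}^p$-space behaves, up to passing to the bidual and invoking the principle of local reflexivity, like a uniformly complemented subspace of an $L^p(\nu)$-space, inside which normalised, disjointly supported vectors span a $1$-complemented isometric copy of $\ell^p_n$ (the projection being a conditional expectation). For $p = 2$ the fact is immediate, the infinite-dimensional ${\cal L}^2$-spaces being exactly the infinite-dimensional Hilbert spaces, and for concrete $L^p$-spaces one argues directly with disjointly supported functions.
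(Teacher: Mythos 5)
Your argument is correct and is essentially the argument behind the cited source: this survey does not prove Lemma \ref{faclem} at all but quotes it from \cite[Lemma 4.2]{DR}, and the proof there runs along exactly your lines---use the ${\cal L}^p$-structure of $F$ to factor $T$ through $\ell^p_n$ with control $\lambda \| T \|$, then splice in a uniformly complemented copy of $\ell^p_n$ inside $E$. You have also correctly isolated where the real content lies: the ``local fact'' that an infinite-dimensional ${\cal L}^p$-space contains copies of $\ell^p_n$, for every $n$, that are uniformly isomorphic \emph{and} uniformly complemented in the whole space. That is a genuine theorem of the Lindenstrauss--Pe{\l}czy\'nski--Rosenthal structure theory (see \cite{LP}, \cite{LR}), and invoking it as a citation is legitimate; your sketched justification of it, however, is not yet a proof---the step from ``$E$ contains $\ell^p_n$'s $\lambda$-isomorphically'' to ``complemented in all of $E$ with constant independent of $n$'' is precisely what the cited theory supplies, and the disjoint-support/conditional-expectation remark only covers isometric copies inside a concrete $L^p(\nu)$, not the isomorphic copies one actually obtains, nor the return from the bidual to $E$. (Two small remarks: for $p=\infty$ the local fact is immediate, since $\ell^\infty_n$ is $1$-injective, so any $\lambda$-isomorphic copy is automatically $\lambda$-complemented; and since the Banach--Mazur distance is attained on finite-dimensional spaces, your choice of $u$ with $\| u \| \| u^{-1} \| \leq \lambda$ is fine, or one simply works with $\lambda + \epsilon$.)
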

\par 
Using Lemma \ref{faclem} and \cite[Theorem 6.2]{GJW} as in the proof of \cite[Theorem 6.4]{GJW}, we obtain (\cite[Theorem 4.3]{DR}):
\begin{theorem} 
Let $p \in [1,\infty]$. Then one of the following assertions is true:
\begin{enumerate}[\rm (i)]
\item $\ell^\infty({\cal K}(E))$ is amenable for every infinite-dimensional ${\cal L}^p$-space $E$;
\item $\ell^\infty({\cal K}(E))$ is not amenable for any infinite-dimensional ${\cal L}^p$-space $E$.
\end{enumerate}
\end{theorem}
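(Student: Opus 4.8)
The plan is to show that whether $\ell^\infty({\cal K}(E))$ is amenable cannot depend on the choice of the infinite-dimensional ${\cal L}^p$-space $E$; the dichotomy then follows at once. So I would fix an infinite-dimensional ${\cal L}^p$-space $E_0$ for which $\ell^\infty({\cal K}(E_0))$ is amenable (if there is none, we are already in case (ii)), fix an arbitrary infinite-dimensional ${\cal L}^p$-space $F$, and aim to deduce that $\ell^\infty({\cal K}(F))$ is amenable. Two preliminary observations are needed. First, ${\cal L}^p$-spaces have the bounded approximation property, so ${\cal K}(G) = \varcl{{\cal F}(G)}$ and hence $\ell^\infty({\cal K}(G)) = \varcl{\ell^\infty({\cal F}(G))}$ for every ${\cal L}^p$-space $G$ (a bounded sequence of operators, each a norm limit of finite rank operators, is a uniform limit of a bounded sequence of finite rank operators). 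Second, $\ell^\infty({\cal K}(F))$ has a bounded approximate identity: from a bounded net $(P_\mu)_\mu$ of finite rank operators on $F$ with $P_\mu \to \id_F$ uniformly on compacta one builds one coordinatewise — index by maps from $\posints$ into the $\mu$-index set, ordered pointwise — the point being that the convergence $P_\mu T \to T$ for $T \in {\cal K}(F)$ is \emph{eventual} and can therefore be arranged in all coordinates simultaneously.

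The heart of the matter is that the transfer machinery of \cite{GJW} — the one used there to pass from ${\cal K}(\ell^p)$ to ${\cal K}(E)$ for arbitrary ${\cal L}^p$-spaces $E$ — is driven by a factorisation whose constant is independent of the operator, and such a factorisation lifts painlessly to $\ell^\infty$-sums. Concretely, I would apply Lemma \ref{faclem} with its ``$E$'' taken to be $E_0$ (infinite-dimensional, as the lemma demands) and its ``$F$'' taken to be $F$, obtaining $C \geq 0$ such that every $T \in {\cal F}(F)$ factors as $T = RS$ with $R \in {\cal F}(E_0,F)$, $S \in {\cal F}(F,E_0)$ and $\|R\|\,\|S\| \leq C\|T\|$. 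Applying this in each coordinate to $\boldsymbol{T} = (T_n)_n \in \ell^\infty({\cal F}(F))$ produces $\boldsymbol{R} = (R_n)_n$ and $\boldsymbol{S} = (S_n)_n$ in $\ell^\infty({\cal F}(E_0,F))$ and $\ell^\infty({\cal F}(F,E_0))$ with $\|\boldsymbol{R}\|_\infty\|\boldsymbol{S}\|_\infty \leq C\|\boldsymbol{T}\|_\infty$ and $\boldsymbol{R}\boldsymbol{S} = \boldsymbol{T}$ — with the \emph{same} constant $C$. Thus the pair $\bigl(\ell^\infty({\cal K}(E_0)),\,\ell^\infty({\cal K}(F))\bigr)$, together with the evident bimodules $\ell^\infty({\cal F}(E_0,F))$ and $\ell^\infty({\cal F}(F,E_0))$, satisfies exactly the hypotheses that the pair $({\cal K}(E_0),{\cal K}(F))$ satisfies in \cite[Theorem 6.2]{GJW}. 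I would then apply \cite[Theorem 6.2]{GJW}, running the argument of \cite[Theorem 6.4]{GJW} with the $\ell^\infty$-sums in place of the original algebras: since $\ell^\infty({\cal K}(E_0))$ is amenable, it has an approximate diagonal, and transporting that diagonal along the module maps built from $\boldsymbol{R}$ and $\boldsymbol{S}$ — using $\boldsymbol{R}\boldsymbol{S} = \boldsymbol{T}$ against the bounded approximate identity of $\ell^\infty({\cal K}(F))$ — yields an approximate diagonal for $\ell^\infty({\cal K}(F))$.

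The step I expect to be delicate is precisely the passage to the $\ell^\infty$-sums. Convergences such as $\boldsymbol{a}\cdot\boldsymbol{d}_\alpha - \boldsymbol{d}_\alpha\cdot\boldsymbol{a} \to 0$ are not uniform over bounded $\boldsymbol{a}$, so one cannot hope to manufacture an approximate diagonal for $\ell^\infty({\cal K}(F))$ coordinatewise out of a \emph{single} approximate diagonal for ${\cal K}(E_0)$; any attempt along those lines fails. What makes the argument go through is that we never touch an approximate diagonal for ${\cal K}(E_0)$: we start from one for $\ell^\infty({\cal K}(E_0))$ — available exactly under the hypothesis of case (i) — and transfer it using nothing but Lemma \ref{faclem}, whose constant $C$ is uniform in the coordinate \emph{and} in the operator. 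Consequently every estimate in the transfer is controlled by $C$ and by $\sup_\alpha\|\boldsymbol{d}_\alpha\|$, with no coordinatewise convergence required beyond what is already encoded in the fixed diagonal of $\ell^\infty({\cal K}(E_0))$, and the remaining work is the bookkeeping: checking that the bimodule actions are the expected ones and that $\ell^\infty({\cal K}(F)) = \varcl{\ell^\infty({\cal F}(F))}$ carries the bounded approximate identity that feeds into \cite[Theorem 6.2]{GJW} — both handled by the two preliminary observations above.
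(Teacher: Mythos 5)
Your proposal is correct and follows essentially the same route as the paper: the paper's proof is precisely to apply Lemma \ref{faclem} coordinatewise---the point being the uniform factorisation constant---and then to run the transfer argument of \cite[Theorem 6.2]{GJW} as in \cite[Theorem 6.4]{GJW}, but with the $\ell^\infty$-sums $\ell^\infty({\cal K}(E_0))$, $\ell^\infty({\cal K}(F))$ and the corresponding $\ell^\infty$-sums of spaces of operators between $E_0$ and $F$ as the algebras and bimodules. Your two preliminary observations (density of $\ell^\infty({\cal F}(F))$ and the bounded approximate identity built by the ``functions on the index set'' trick) are exactly the bookkeeping needed to invoke that theorem, so there is no gap.
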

\par 
In view of Corollary \ref{matticor} and Example \ref{Lpex}, this yields:
\begin{corollary} \label{matticor2}
Let $p \in (1,\infty) \setminus \{ 2 \}$ such that ${\cal B}(\ell^p)$ is amenable. Then $\ell^\infty({\cal K}(\ell^2 \oplus \ell^p))$ is amenable.
\end{corollary}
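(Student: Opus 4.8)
The plan is to assemble the statement from the three ingredients flagged immediately before it, in the right logical order; essentially no new work is required. First I would apply Corollary~\ref{matticor} to the given $p$: since ${\cal B}(\ell^p)$ is amenable by hypothesis, $\ell^\infty({\cal K}(\ell^p))$ is amenable. The point worth recording at this stage is that $\ell^p$ is itself an infinite-dimensional ${\cal L}^p$-space --- it is an $L^p$-space in the measure-theoretic sense, hence an ${\cal L}^p$-space by Example~\ref{Lpex}.

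Next I would invoke the dichotomy in the Theorem above: either $\ell^\infty({\cal K}(E))$ is amenable for \emph{every} infinite-dimensional ${\cal L}^p$-space $E$, or it is amenable for \emph{no} such $E$. Having just produced one infinite-dimensional ${\cal L}^p$-space, namely $\ell^p$, for which $\ell^\infty({\cal K}(\cdot))$ \emph{is} amenable, the second alternative is excluded, so the first must hold: $\ell^\infty({\cal K}(E))$ is amenable for every infinite-dimensional ${\cal L}^p$-space $E$.

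Finally, because $p \in (1,\infty) \setminus \{2\}$, Example~\ref{Lpex} asserts that $\ell^2 \oplus \ell^p$ is an ${\cal L}^p$-space, and it is plainly infinite-dimensional. Applying the conclusion of the previous step to $E = \ell^2 \oplus \ell^p$ gives that $\ell^\infty({\cal K}(\ell^2 \oplus \ell^p))$ is amenable, which is the assertion.

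I do not expect any obstacle of substance: every nontrivial input has already been established --- the passage from ${\cal B}(\ell^p)$ to $\ell^\infty({\cal K}(\ell^p))$ via Theorem~\ref{matti}, the rigidity dichotomy, which rests on the norm-controlled factorization of Lemma~\ref{faclem}, and the fact that $\ell^2 \oplus \ell^p$ is an ${\cal L}^p$-space that is moreover not isomorphic to an $L^p$-space. The only step needing a little care is the use of the dichotomy in its contrapositive form: one first rules out alternative~(ii) by exhibiting the single favourable example $\ell^p$, and only then transports amenability to the target space $\ell^2 \oplus \ell^p$ through alternative~(i). The hypothesis $p \neq 2$ is used exactly once, in the appeal to Example~\ref{Lpex}; for $p = 2$ the assertion is vacuously true, since ${\cal B}(\ell^2)$ is not amenable.
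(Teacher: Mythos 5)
Your argument is correct and is exactly the paper's intended proof: Corollary~\ref{matticor} gives amenability of $\ell^\infty({\cal K}(\ell^p))$, this rules out alternative (ii) of the dichotomy theorem since $\ell^p$ is an infinite-dimensional ${\cal L}^p$-space, and alternative (i) applied to the infinite-dimensional ${\cal L}^p$-space $\ell^2 \oplus \ell^p$ (Example~\ref{Lpex}) yields the conclusion. No discrepancies with the paper's route.
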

\par 
In \cite{DR}, it was shown that $\ell^\infty({\cal K}(\ell^2 \oplus \ell^p))$ is indeed not amenable if $p=1,\infty$ (\cite[Theorem 5.3]{DR}). The proof of \cite[Theorem 5.3]{DR} involves a matrix block argument like the proof of Theorem \ref{george}. This matrix block argument, however, breaks down if $p \in (1,\infty) \setminus \{ 2 \}$ (\cite[Proposition 5.4]{DR}). Still, $\ell^\infty({\cal K}(\ell^2 \oplus \ell^p))$ is not amenable for \emph{any} $p \in [1,\infty]$: for $p = 2$, this has long been known, and for $p \in (1,\infty)$, this was shown only very recently by the author (\cite{Run}). We shall outline the argument from \cite{Run} in the two sections below.
\begin{remark}
There is a gap in the proof of \cite[Lemma 3.3]{DR}: implication (i) $\Longrightarrow$ (ii)---claimed to be routine in \cite{DR}---is, in fact, unproven. Consequently, (iii) $\Longrightarrow$ (i) of \cite[Theorem 3.2]{DR} lacks proof as well. None of this, however, affects the results from \cite{DR} discussed in this section.
\end{remark}

\section{Non-amenability of $\ell^\infty({\cal K}(\ell^2 \oplus E))$ for certain $E$}

Let $E$ be a Banach space. A \emph{basis} for $E$ is a sequence $( x_n )_{n=1}^\infty$ in $E$ such that, for every $x \in E$, there is a sequence $( \lambda_n )_{n=1}^\infty$ such that
\[
  x = \sum_{n=1}^\infty \lambda_n x_n.
\]
For each $n \in \posints$, we obtain a linear functional $x_n^\ast$ on $E$ that assigns to each $x \in E$ the coefficient $\lambda_n$ of $x_n$ in the above expansion of $x$; it is easy to see that each $x_n^\ast$ is continuous.
\par 
In this section, we indicate a proof of the following (\cite[Theorem 3.2]{Run}):
\begin{theorem} \label{ithm}
Let $E$ be a Banach space with a basis $( x_n )_{n=1}^\infty$ such that there is $C > 0$ with
\begin{equation} \tag{\mbox{$\ast$}}
  \sum_{n=1}^\infty \| S x_n \| \| T x_n^\ast \| \leq C \, N \| S \| \| T \| 
  \qquad (N \in \posints, \, S \in {\cal B}(E,\ell^2_N), \, T \in {\cal B}(E^\ast, \ell^2_N)).
\end{equation}
Then $\ell^\infty({\cal K}(\ell^2 \oplus E))$ is not amenable.
\end{theorem}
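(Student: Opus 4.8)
The plan is to argue by contradiction, converting an approximate diagonal for $\ell^\infty({\cal K}(\ell^2\oplus E))$ into the configuration of almost-invariant tensors that Ozawa's Lemma forbids.

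\emph{Setting the stage.} Index the $\ell^\infty$-sum by the set $\proj$ of primes. Inside the $\ell^2$-summand of the $p$-th copy of $\ell^2\oplus E$ fix an isometric copy of $\ell^2(\Lambda_p)$; let $Q_p$ be the norm-one projection onto it, $q_p=Q_p^\ast Q_p\in{\cal F}(\ell^2\oplus E)$ the associated rank-$|\Lambda_p|$ idempotent, and let $u_{j,p}\in{\cal F}(\ell^2\oplus E)$ agree with $\pi_p(g_j)$ on $\ell^2(\Lambda_p)$ and vanish off it ($j=1,\dots,m+1$). These assemble to norm-$\le 1$ elements $\boldsymbol q,\boldsymbol u_1,\dots,\boldsymbol u_{m+1}$ of $\ell^\infty({\cal K}(\ell^2\oplus E))$, with $\boldsymbol q\boldsymbol u_j=\boldsymbol u_j\boldsymbol q=\boldsymbol u_j$ and each $\boldsymbol u_j$ admitting a partial inverse $\boldsymbol u_j'$ ($=\pi_p(g_j)^{-1}$ on $\ell^2(\Lambda_p)$, $0$ off it) satisfying $\boldsymbol u_j\boldsymbol u_j'=\boldsymbol u_j'\boldsymbol u_j=\boldsymbol q$. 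Fix an approximate diagonal $(\boldsymbol d_\beta)$ bounded by some $M\ge 1$; discarding a tail of its defining series we may assume $\boldsymbol d_\beta=\sum_{l=1}^{r_\beta}\boldsymbol a_l^\beta\tensor\boldsymbol b_l^\beta$ is a finite sum with $\sum_l\|\boldsymbol a_l^\beta\|\,\|\boldsymbol b_l^\beta\|\le M+1$, where $r_\beta$ does not depend on $p$.

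\emph{Transfer to the Hilbert blocks.} For a unit vector $\zeta_p\in\ell^2(\Lambda_p)$ let $\Phi_p\colon{\cal K}(\ell^2\oplus E)\Tensor{\cal K}(\ell^2\oplus E)\to\ell^2(\Lambda_p)\Tensor\ell^2(\Lambda_p)$ be the contraction $\Phi_p(S\tensor T):=(Q_pS\zeta_p)\tensor(Q_pT^\ast\zeta_p)$ (using $\ell^2(\Lambda_p)^\ast=\ell^2(\Lambda_p)$), and put $\Xi_p:=\Phi_p\bigl((\boldsymbol q\cdot\boldsymbol d_\beta\cdot\boldsymbol q)_p\bigr)=\sum_{l=1}^{r}\xi_{l,p}\tensor\eta_{l,p}$, a sum of $r:=r_\beta$ elementary tensors with $r$ independent of $p$. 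Because $u_{j,p}$ acts as $\pi_p(g_j)$ on $\ell^2(\Lambda_p)$ and kills its complement, a direct check gives the equivariance $\Phi_p\bigl((\boldsymbol u_j\cdot\boldsymbol d_\beta\cdot\boldsymbol u_j')_p\bigr)=(\pi_p(g_j)\tensor\pi_p(g_j))\Xi_p$ for every $j$. On the other hand, choosing $\beta$ so that $\|\boldsymbol u_j\cdot\boldsymbol d_\beta-\boldsymbol d_\beta\cdot\boldsymbol u_j\|<\epsilon'$ and $\|\boldsymbol q\cdot\boldsymbol d_\beta-\boldsymbol d_\beta\cdot\boldsymbol q\|<\epsilon'$, the relations $\boldsymbol u_j\boldsymbol u_j'=\boldsymbol q=\boldsymbol q^2$ and the near-centrality of $\boldsymbol d_\beta$ yield, by elementary module manipulations, $\|\boldsymbol u_j\cdot\boldsymbol d_\beta\cdot\boldsymbol u_j'-\boldsymbol q\cdot\boldsymbol d_\beta\cdot\boldsymbol q\|\le 2\epsilon'$; since $\Phi_p$ is a contraction this gives $\|\Xi_p-(\pi_p(g_j)\tensor\pi_p(g_j))\Xi_p\|\le 2\epsilon'$ for all $j$ and all $p$.

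\emph{The lower bound, and the conclusion.} What remains is to choose the $\zeta_p$ so that $\|\Xi_p\|\ge c$ for some $c>0$ independent of $p$: with this in hand, taking $\epsilon':=c\epsilon/2$ makes the last estimate read $\|\Xi_p-(\pi_p(g_j)\tensor\pi_p(g_j))\Xi_p\|\le\epsilon\|\Xi_p\|$ with $\Xi_p\ne 0$, so for every $\epsilon>0$ we have exhibited a single $r$ serving all primes $p$ — which is impossible by Ozawa's Lemma. Hence $\ell^\infty({\cal K}(\ell^2\oplus E))$ is not amenable. The idea behind the lower bound is that $\Delta\boldsymbol d_\beta$ is an approximate identity, so $Q_p(\Delta\boldsymbol d_\beta)_p|_{\ell^2(\Lambda_p)}$ is close to $\id_{\ell^2(\Lambda_p)}$, whereas $\Xi_p$ records the same information but with an extra copy of $q_p$ inserted, truncating everything back into the Hilbert block; the resulting discrepancy is a sum of operators factoring through the $E$-summand of $\ell^2\oplus E$ (and through the orthocomplement of $\ell^2(\Lambda_p)$ in $\ell^2$), and it is exactly here that hypothesis~$(\ast)$ enters, applied to the blocks $Q_p(\boldsymbol a_l^\beta)_p|_E\colon E\to\ell^2_{|\Lambda_p|}$ and the corresponding dual blocks of the $\boldsymbol b_l^\beta$ — it plays the role that the inequality $\sum_n\|Se_n\|\,\|Te_n^\ast\|\le N\|S\|\|T\|$ plays in Ozawa's treatment of $p=1,2,\infty$, with the trivial Hilbert-space version taking care of the orthocomplement part. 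The main obstacle is to make this control quantitative enough, relative to the growing dimension $|\Lambda_p|$, to extract a single $c>0$; this is the one genuinely delicate step, and it is the reason the hypothesis is stated as the estimate~$(\ast)$.
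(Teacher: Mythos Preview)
Your global strategy---assume amenability, manufacture for every prime $p$ a tensor $\Xi_p\in\ell^2(\Lambda_p)\Tensor\ell^2(\Lambda_p)$ of fixed rank $r$ that is almost invariant under each $\pi_p(g_j)\tensor\pi_p(g_j)$, and contradict Ozawa's Lemma---is the right one, and the setup (indexing by $\mathbb P$, the idempotent $\boldsymbol q$, the finite set $\{\boldsymbol u_j\}$) matches the paper. The genuine gap is exactly where you locate it: the uniform lower bound $\|\Xi_p\|\ge c>0$. You describe a heuristic but do not prove it, and on inspection the heuristic does not work. The scalar one naturally extracts from your $\Xi_p$ via the trace pairing is $\sum_l\langle(b_l)_p\,q_p\,(a_l)_p\zeta_p,\zeta_p\rangle$, which involves the \emph{opposite} product $\sum_l b_l a_l$ rather than $\Delta\boldsymbol d_\beta=\sum_l a_l b_l$; approximate-identity information about $\Delta\boldsymbol d_\beta$ therefore says nothing about it. And even if a comparison of the sort you sketch held, the estimate $(\ast)$ has the form $C\,|\Lambda_p|\,\|S\|\|T\|$, which \emph{grows} with $p$; bounding a ``discrepancy'' by it cannot yield a lower bound independent of~$p$. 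Your single-vector transfer $\Phi_p$ is a contraction, so upper control comes for free, but it discards precisely the information needed for the lower bound.

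The paper sidesteps this by not working with an approximate diagonal directly. It first proves the corner lemma (Lemma~\ref{ilem}): amenability of $\A$ gives, for the idempotent $e=(P_p)_p$ and the finite set $F=\{(\pi_p(g_j))_p\}\subset e\A e$, elements $a_1,b_1,\dots,a_r,b_r\in\A$ with $\sum_k a_k b_k=e$ \emph{exactly} and $\bigl\|\sum_k x a_k\tensor b_k-a_k\tensor b_k x\bigr\|<\epsilon$ for $x\in F$. The transfer to $\ell^2(\Lambda_p)\Tensor\ell^2(\Lambda_p)$ is then carried out by summing over the basis of $\ell^2\oplus E$ (the standard basis of $\ell^2$ together with $(x_n)$), as in Ozawa's argument. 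Here $(\ast)$ enters in its natural role as an \emph{upper} bound: it guarantees that this basis-summed map has norm $\lesssim|\Lambda_p|$ on the $E$-part, the $\ell^2$-part being trivial. The exact identity $\sum_k a_k b_k=e$ then forces the image tensor to have trace $|\Lambda_p|$, hence projective norm $\ge|\Lambda_p|$; the perturbation is $\lesssim\epsilon\,|\Lambda_p|$, so the ratio is $\lesssim\epsilon$ uniformly in $p$, contradicting Ozawa's Lemma. In short: replace your truncated approximate diagonal by Lemma~\ref{ilem}, and replace the single-vector evaluation $\Phi_p$ by the basis-summed map---then $(\ast)$ is used once, as an upper bound, and the lower bound is automatic.
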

\par 
As the proof of this theorem is rather technical, we shall only sketch it; for the details see \cite{Run}.
\par 
We first record a lemma, which is easy to prove (\cite[Lemma 1.2]{Run}; compare \cite[Definiton 1.2]{Oza}):
\begin{lemma} \label{ilem}
Let $\A$ be an amenable Banach algebra, and let $e \in \A$ be an idempotent. Then, for any $\epsilon > 0$ and any finite subset $F$ of $e\A e$, there are $a_1, b_1, \ldots, a_r, b_r \in \A$ such that
\[
  \sum_{k=1}^r a_k b_k = e
\]
and 
\[
  \left\| \sum_{k=1}^r x a_k \tensor b_k - a_k \tensor b_k x \right\|_{\A \Tensor \A} < \epsilon \qquad (x \in F).
\]
\end{lemma}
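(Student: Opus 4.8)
The plan is to obtain the required finitely-supported element of $\A\Tensor\A$ by compressing an approximate diagonal for $\A$ to the corner $e\A e$, truncating it to a finite sum, and then renormalising so that $\Delta$ of it equals $e$ exactly. I will use freely the facts recalled earlier: an amenable $\A$ has a bounded approximate diagonal $(\boldsymbol{d}_\alpha)_\alpha$, and applying the bimodule map $\Delta$ to it yields a two-sided bounded approximate identity $(\Delta\boldsymbol{d}_\alpha)_\alpha$. First I would set $\boldsymbol{e}_\alpha:=e\cdot\boldsymbol{d}_\alpha\cdot e$; a one-line manipulation using $ex=x=xe$ for $x\in e\A e$ together with $x\cdot\boldsymbol{d}_\alpha-\boldsymbol{d}_\alpha\cdot x\to0$ shows $x\cdot\boldsymbol{e}_\alpha-\boldsymbol{e}_\alpha\cdot x\to0$ for all $x\in e\A e$, while $\Delta\boldsymbol{e}_\alpha=e(\Delta\boldsymbol{d}_\alpha)e\to e$ (as $\Delta\boldsymbol{d}_\alpha$ is an approximate identity and $e^2=e$). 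Thus $(\boldsymbol{e}_\alpha)_\alpha$ behaves like an ``approximate diagonal localised at $e$''.

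Next, given $\epsilon>0$ and finite $F\subseteq e\A e$, I would pick $\alpha$ so that $\|x\cdot\boldsymbol{e}_\alpha-\boldsymbol{e}_\alpha\cdot x\|$ and $\|\Delta\boldsymbol{e}_\alpha-e\|$ are as small as needed for all $x\in F$; then, using density of the algebraic tensor product in $\A\Tensor\A$, approximate $\boldsymbol{e}_\alpha$ by a finite sum $\sum_{k=1}^r c_k\tensor d_k$ and set $\boldsymbol{m}_0:=\sum_{k=1}^r(ec_k)\tensor(d_ke)$. This $\boldsymbol{m}_0$ is still a finite sum, still close to $\boldsymbol{e}_\alpha$ (hence still nearly commutes with $F$), and satisfies $\boldsymbol{m}_0\cdot e=\boldsymbol{m}_0$; moreover $v:=\Delta\boldsymbol{m}_0$ lies in $e\A e$ and $\|v-e\|$ can be made $<1$. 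Since the corner $e\A e$ is a norm-closed subalgebra of $\A$ with identity $e$, the Neumann series $w:=\sum_{n\geq0}(e-v)^n$ converges there to $v^{-1}$, with $\|w-e\|$ controlled by $\|v-e\|$.

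Then I would put $\boldsymbol{m}:=\boldsymbol{m}_0\cdot w=\sum_{k=1}^r(ec_k)\tensor(d_kew)$ and read off $a_k:=ec_k$, $b_k:=d_kew$. On one hand $\sum_{k=1}^r a_kb_k=\Delta\boldsymbol{m}=(\Delta\boldsymbol{m}_0)w=vw=e$, so the exact normalisation holds. On the other hand, for the commutation estimate one expands $x\cdot\boldsymbol{m}-\boldsymbol{m}\cdot x=\boldsymbol{m}_0\cdot(xw-wx)+(x\cdot\boldsymbol{m}_0-\boldsymbol{m}_0\cdot x)\cdot w$; since $x,w\in e\A e$ one has $xw-wx=x(w-e)-(w-e)x$, so the first summand is $O(\|\boldsymbol{m}_0\|\,\|x\|\,\|w-e\|)$ and the second is $O(\|w\|\cdot\|x\cdot\boldsymbol{m}_0-\boldsymbol{m}_0\cdot x\|)$; both fall below $\epsilon$ once the approximations above are taken fine enough, which is legitimate since $F$ is finite (so $\sup_{x\in F}\|x\|<\infty$) and $\|w\|$ stays bounded. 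Hence $\|x\cdot\boldsymbol{m}-\boldsymbol{m}\cdot x\|<\epsilon$ for all $x\in F$, as wanted.

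The step I expect to be the real obstacle is forcing the \emph{exact} equality $\sum_k a_kb_k=e$: compression and truncation only deliver $\Delta\boldsymbol{m}_0\approx e$, and correcting this to an identity requires the renormalisation by $w=v^{-1}$. The one idea that makes this harmless is that $w$ is close to the ``local unit'' $e$, which acts as a two-sided identity on each $x\in F$ and on the right leg of $\boldsymbol{m}_0$, so that right-multiplication by $w$ disturbs the near-commutation only by an amount absorbable into $\epsilon$; the other ingredients (the compression identities, density of the algebraic tensor product, the Neumann series in the corner) are routine.
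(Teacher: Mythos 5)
Your argument is correct and is essentially the standard proof that the survey leaves to the cited reference (\cite[Lemma 1.2]{Run}): compress a bounded approximate diagonal by $e$, truncate to an algebraic tensor, and repair the normalisation $\Delta\boldsymbol{m}_0\approx e$ to exact equality by right-multiplying with the Neumann-series inverse $w=(\Delta\boldsymbol{m}_0)^{-1}$ in the unital corner $e\A e$, noting that $\|w-e\|$ can be made small so the near-commutation with the finite set $F$ survives. No gaps; the quantifier order (choose $\alpha$ and the tensor approximation after fixing $\epsilon$ and $F$) and the uniform bounds on $\|\boldsymbol{m}_0\|$ and $\|w\|$ are handled as they should be.
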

\par 
We now identify $\ell^\infty({\cal K}(\ell^2 \oplus E))$ with $\ell^\infty(\mathbb{P}, {\cal K}(\ell^2 \oplus E))$ and denote it by $\A$ for simplicity. Each summand of $\A$ has a canonical block matrix structure
\[
  {\cal K}(\ell^2 \oplus E) = \begin{bmatrix} {\cal K}(\ell^2) & {\cal K}(E,\ell^2) \\
                              {\cal K}(\ell^2,E) & {\cal K}(E)
                              \end{bmatrix}.
\]
For each $p \in \mathbb P$, we embed ${\cal B}(\ell^2(\Lambda_p))$ into ${\cal K}(\ell^2 \oplus E)$ as upper left corner of the corresponding block matrix. This allows us to embed $\text{$\ell^\infty$-}\bigoplus_{p \in \mathbb P} {\cal B}(\ell^2(\Lambda_p))$ into $\A$, and in particular, we can consider
\[
  F := \left\{ ( \pi_p(g_j) )_{p \in \mathbb P} : j =1, \ldots, m+1 \right\}
\]
as a finite subset of $\A$. Furthermore, we let $\A$ act (as block diagonal matrices) on the space
\[
  \ell^2(\mathbb{P}, \ell^2 \oplus E) \cong \ell^2(\mathbb{P},\ell^2) \oplus \ell^2(\mathbb{P},E)
\]
For any $p \in \mathbb P$, let $P_p \in {\cal B}(\ell^2)$ be the canonical projection onto the first $|\Lambda_p|$ coordinates of the $p^\mathrm{th}$ $\ell^2$-summand of $\ell^2(\mathbb{P},\ell^2) \oplus \ell^2(\mathbb{P},E)$, and let $e = (P_p)_{p \in \mathbb P}$. Then $e$ is an idempotent in $\A$ with
\[
  e \A e =   \text{$\ell^\infty$-}\bigoplus_{p \in \mathbb P} {\cal B}(\ell^2(\Lambda_p)).
\]
\par 
The rest of the proof resembles that of \cite[Theorem 1.1]{Oza}, with Lemma \ref{ilem} replacing \cite[Definition 1.2]{Oza} and ($\ast$) providing the required estimates similar to \cite[Lemma 2.1]{Oza}: if $\A$ is amenable, then Lemma \ref{ilem} is true, which eventually contradicts Ozawa's Lemma.

\section{Non-amenability of ${\cal B}(\ell^p)$ for $p \in (1,\infty)$} 

With Theorem \ref{ithm} proven, we shall now see that its hypotheses are indeed satisfied if $E = \ell^p$ with $p \in (1,\infty)$.
\par 
Let $N \in \posints$, let $S \in {\cal B}(\ell^p,\ell^2_N)$, and let $T \in {\cal B}(\ell^{p'}, \ell^2_N)$. We can identify
\[
  {\cal B}(\ell^p,\ell^2_N) = \ell^{p'} \wTensor \ell^2_N = \ell^{p'} \tensor \ell_N^2
  \qquad\text{and}\qquad 
  {\cal B}(\ell^{p'},\ell^2_N) = \ell^p \wTensor \ell^2_N = \ell^p \tensor \ell_N^2,
\]
where $\wTensor$ is the injective tensor product. On the other hand, we can also---algebraically---identify
\[
  \ell^{p'}(\ell^2_N) = \ell^{p'} \tensor \ell_N^2 \qquad\text{and}\qquad \ell^p(\ell^2_N) = \ell^p \tensor \ell_N^2
\]
and thus view $S$ and $T$ as elements of $\ell^{p'}(\ell^2_N)$ and $\ell^p(\ell^2_N)$, respectively. H\"older's inequality then yields immediately that 
\begin{equation} \tag{\mbox{$\ast\ast$}}
  \sum_{n=1}^\infty \| S e_n \| \| T e_n^\ast \| \leq \| S \|_{\ell^{p'}(\ell^2_N)} \| T \|_{\ell^p(\ell^2_N)} 
\end{equation}
where $( e_n )_{n=1}^\infty$ is the canonical basis of $\ell^p$.
\par 
In order to prove ($\ast$) for $E = \ell^p$, we thus need to relate the norms on $\ell^{p'} \wTensor \ell^2_N$ and $\ell^{p'}(\ell^2_N)$ and on $\ell^p \wTensor \ell^2_N$ and $\ell^p(\ell^2_N)$, respectively. This relation is provided by the notion of a $p$-summing operator (see \cite{DJT}, for instance). 
\par 
For convenience, we recall:
\begin{definition} \label{psum}
Let $p \in [1,\infty)$, and $E$ and $F$ be Banach spaces. A linear map $T \!: E \to F$ is called \emph{$p$-summing} if the amplification $\id_{\ell^p} \tensor T \!: \ell^p \tensor E \to \ell^p \tensor F$ extends to a bounded map from $\ell^p \wTensor E$ to $\ell^p(F)$. The operator norm of $\id_{\ell^p \tensor T} \!: \ell^p \check{\otimes} E \to \ell^p(F)$ is called the \emph{$p$-summing norm} of $T$ and denoted by $\pi_p(T)$.
\end{definition}
\par 
Every finite rank operator is $p$-summing (\cite[Proposition 2.3]{DJT}), but the identity on a Banach space $E$ is $p$-summing if and only if $\dim E < \infty$ (\cite[Weak Dvoretzky--Rogers Theorem 2.18]{DJT}).
\par 
The $p$-summing norm of $\id_{\ell^2_N}$ was computed by Y.\ Gordon in \cite{Gor} for any $p \in [1,\infty)$. For our purpose, his exact formula is not important, but rather a consequence of it (\cite[Theorem 5]{Gor}), namely the asymptotic estimate
\[
  \pi_p(\id_{\ell^2_N}) \sim \sqrt{N}.
\]
This means, in particular, that there is $C_p > 0$ such that
\[
  \pi_p(\id_{\ell^2_N}) \leq C_p \, \sqrt{N}.
\]
If $p \in (1,\infty)$, we apply the same asymptotic estimate to $\pi_{p'}(\id_{\ell^2_N})$ and obtain $C_{p'} > 0$ such that
\[
  \pi_{p'}(\id_{\ell^2_N}) \leq C_{p'} \, \sqrt{N}.
\]
Together with ($\ast\ast$) and Definition \ref{psum}, these estimates yield:
\begin{lemma}
 Let $p \in (1,\infty)$. Then there is $C > 0$ such that
\[
  \sum_{n=1}^\infty \| S e_n \|_{\ell^2_N} \| T e^\ast_n \|_{\ell^2_N} \leq C \, N \| S \| \| T \| 
  \qquad (N \in \posints, \, S \in {\cal B}(\ell^p, \ell^2_N), \, T \in {\cal B}(\ell^{p'}, \ell^2_N)).
\]
\end{lemma}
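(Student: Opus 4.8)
\noindent\textit{Proof sketch.}
The plan is to chain together three ingredients, all of which have been assembled above: the elementary H\"older estimate ($\ast\ast$), the isometric identifications ${\cal B}(\ell^p,\ell^2_N)=\ell^{p'}\wTensor\ell^2_N$ and ${\cal B}(\ell^{p'},\ell^2_N)=\ell^p\wTensor\ell^2_N$, and Gordon's asymptotic $\pi_r(\id_{\ell^2_N})\sim\sqrt N$ applied once with $r=p$ and once with $r=p'$. First I would fix $N\in\posints$, $S\in{\cal B}(\ell^p,\ell^2_N)$ and $T\in{\cal B}(\ell^{p'},\ell^2_N)$. Since $\ell^2_N$ is finite-dimensional, $S$ and $T$ are finite-rank, hence honest elements of the algebraic tensor products $\ell^{p'}\tensor\ell^2_N$ and $\ell^p\tensor\ell^2_N$, so the comparisons below are literal inequalities rather than statements about extensions to completions.

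Next I would apply Definition \ref{psum} with $\id_{\ell^2_N}$ in the r\^ole of the operator, once with the summing exponent equal to $p'$ and once with it equal to $p$: the amplification $\id_{\ell^{p'}}\tensor\id_{\ell^2_N}$ maps $\ell^{p'}\wTensor\ell^2_N$ into $\ell^{p'}(\ell^2_N)$ with norm $\pi_{p'}(\id_{\ell^2_N})$, and $\id_{\ell^p}\tensor\id_{\ell^2_N}$ maps $\ell^p\wTensor\ell^2_N$ into $\ell^p(\ell^2_N)$ with norm $\pi_p(\id_{\ell^2_N})$. Combining with the two injective-tensor identifications this gives
\[
  \|S\|_{\ell^{p'}(\ell^2_N)}\le\pi_{p'}(\id_{\ell^2_N})\,\|S\|
  \qquad\text{and}\qquad
  \|T\|_{\ell^p(\ell^2_N)}\le\pi_p(\id_{\ell^2_N})\,\|T\|.
\]
Feeding these into ($\ast\ast$) yields
\[
  \sum_{n=1}^\infty\|Se_n\|_{\ell^2_N}\|Te_n^\ast\|_{\ell^2_N}
  \le\pi_{p'}(\id_{\ell^2_N})\,\pi_p(\id_{\ell^2_N})\,\|S\|\,\|T\|,
\]
and then Gordon's estimate---the existence of $C_p,C_{p'}>0$ with $\pi_p(\id_{\ell^2_N})\le C_p\sqrt N$ and $\pi_{p'}(\id_{\ell^2_N})\le C_{p'}\sqrt N$, which requires $p,p'\in[1,\infty)$ and hence $p\in(1,\infty)$---turns the right-hand side into $C_pC_{p'}\,N\,\|S\|\,\|T\|$. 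Setting $C:=C_pC_{p'}$ completes the argument.

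I do not expect a serious mathematical obstacle, since the one genuinely deep input, Gordon's computation of $\pi_r(\id_{\ell^2_N})$, is quoted rather than reproved. The only real care needed is bookkeeping: one must check that the identifications ${\cal B}(\ell^p,\ell^2_N)=\ell^{p'}\wTensor\ell^2_N$ and ${\cal B}(\ell^{p'},\ell^2_N)=\ell^p\wTensor\ell^2_N$ are \emph{isometric}, so that no stray constants creep in, and that the two invocations of the $p$-summing bound use the exponents $p$ and $p'$ in the correct slots---it is precisely the product $\pi_{p'}(\id_{\ell^2_N})\,\pi_p(\id_{\ell^2_N})\sim N$, and not an exponent mismatch, that produces the single factor $N$ on the right-hand side. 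This is also exactly where the hypothesis $p\in(1,\infty)$ enters: at $p=1$ (resp.\ $p=\infty$) the conjugate exponent leaves $[1,\infty)$ and Gordon's $\sqrt N$-asymptotic in the required form is unavailable, consistently with the fact that the endpoint cases were handled separately via Ozawa's original estimate.
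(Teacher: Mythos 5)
Your proposal is correct and follows essentially the same route as the paper: you feed the H\"older estimate ($\ast\ast$) with the isometric identifications ${\cal B}(\ell^p,\ell^2_N)=\ell^{p'}\wTensor\ell^2_N$ and ${\cal B}(\ell^{p'},\ell^2_N)=\ell^p\wTensor\ell^2_N$ into the $p'$- and $p$-summing bounds $\| S \|_{\ell^{p'}(\ell^2_N)} \leq \pi_{p'}(\id_{\ell^2_N})\| S\|$ and $\| T \|_{\ell^{p}(\ell^2_N)} \leq \pi_{p}(\id_{\ell^2_N})\| T\|$, and then invoke Gordon's estimates $\pi_p(\id_{\ell^2_N}) \leq C_p\sqrt{N}$, $\pi_{p'}(\id_{\ell^2_N}) \leq C_{p'}\sqrt{N}$ to get $C = C_pC_{p'}$, exactly as in the text. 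The exponents are placed in the correct slots and no gap remains.
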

\par 
Hence, $\ell^p$ with $p \in (1,\infty)$ satisfies the hypothesis of Theorem \ref{ithm}, so that $\ell^\infty({\cal K}(\ell^2 \oplus \ell^p))$ is not amenable. By Corollary \ref{matticor}, this means that ${\cl B}(\ell^p)$ is not amenable either.
\par 
With a little more effort, we obtain:
\begin{theorem} \label{nonamthm}
Let $p \in (1,\infty)$, and let $E$ be an ${\cal L}^p$-space. Then ${\cal B}(\ell^p(E))$ is not amenable.
\end{theorem}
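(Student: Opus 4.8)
\emph{Proof proposal.} The plan is to run the argument of the previous section through the dichotomy \cite[Theorem 4.3]{DR}, thereby passing from $\ell^p$ to an arbitrary infinite-dimensional ${\cal L}^p$-space. The one extra Banach-space fact I need is that $\ell^p(E)$ is again an ${\cal L}^p$-space whenever $E$ is; this is standard (see \cite{LP}: for $p \in (1,\infty)$ an ${\cal L}^p$-space is reflexive and complemented in some $L^p(\mu)$, whence $\ell^p(E)$ is reflexive and complemented in $\ell^p(L^p(\mu))$, itself an $L^p$-space). A direct perturbation argument also works: for $p < \infty$ a finite-dimensional $X \subseteq \ell^p(E)$ is, up to an arbitrarily small perturbation, supported on finitely many coordinates, and there it is engulfed by a finite $\ell^p$-sum of the finite-dimensional ``$\ell^p_{m_i}$-like'' subspaces of $E$ supplied by the ${\cal L}^p$-property of $E$, using that Banach--Mazur distance is controlled under $\ell^p$-direct sums. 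Either way, $\ell^p(E)$ is an infinite-dimensional ${\cal L}^p$-space.

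Granting this, the argument I would give closes quickly. First, by the computation of the previous section, $\ell^p$ satisfies $(\ast)$, so Theorem \ref{ithm} already gives that $\ell^\infty({\cal K}(\ell^2 \oplus \ell^p))$ is not amenable; since $\ell^2 \oplus \ell^p$ is an infinite-dimensional ${\cal L}^p$-space by Example \ref{Lpex}, the dichotomy \cite[Theorem 4.3]{DR} must hold in its second alternative, so that $\ell^\infty({\cal K}(F))$ is not amenable for \emph{any} infinite-dimensional ${\cal L}^p$-space $F$---in particular for $F = \ell^p(E)$. Then I would assume, for contradiction, that ${\cal B}(\ell^p(E))$ is amenable, invoke Theorem \ref{matti} (applied with the Banach space $E$) to deduce that $\ell^\infty({\cal B}(\ell^p(E)))$ is amenable, and observe that $\ell^\infty({\cal K}(\ell^p(E)))$ is a closed ideal therein which possesses a bounded approximate identity: indeed ${\cal K}(\ell^p(E))$ is amenable by \cite{GJW}, since $\ell^p(E)$ is an ${\cal L}^p$-space, hence has a bounded approximate identity of some bound $M$, and the $\ell^\infty$-direct sum of copies of a fixed algebra with a bounded approximate identity of bound $M$ again has one of bound $M$. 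Since a closed ideal with a bounded approximate identity in an amenable Banach algebra is itself amenable---this is exactly the reasoning that takes Theorem \ref{matti} to Corollary \ref{matticor}---it would follow that $\ell^\infty({\cal K}(\ell^p(E)))$ is amenable, contradicting the preceding sentence. Hence ${\cal B}(\ell^p(E))$ is not amenable.

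I do not expect any genuinely hard step here: all the analytic weight has already been discharged in Theorem \ref{ithm} and in the $p$-summing estimate $\pi_p(\id_{\ell^2_N}) \sim \sqrt{N}$. The only points requiring care are bookkeeping ones---checking that $\ell^p(E)$ really is an ${\cal L}^p$-space, so that \cite[Theorem 4.3]{DR} applies to it, and noting that a general ${\cal L}^p$-space $E$ need not carry a basis, so that Theorem \ref{ithm} cannot be applied to $E$ directly. It is precisely because of this last obstruction that the dichotomy \cite[Theorem 4.3]{DR} is indispensable: it is what transports the non-amenability of $\ell^\infty({\cal K}(\ell^2 \oplus \ell^p))$---the one instance Theorem \ref{ithm} delivers outright---to $\ell^\infty({\cal K}(\ell^p(E)))$ for the $E$ at hand.
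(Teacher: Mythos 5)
Your proposal is correct and takes essentially the same route as the paper's (that is, the argument from \cite{Run} which the survey compresses into ``with a little more effort''): assume ${\cal B}(\ell^p(E))$ is amenable, pass to $\ell^\infty({\cal B}(\ell^p(E)))$ via Theorem \ref{matti}, descend to the closed ideal $\ell^\infty({\cal K}(\ell^p(E)))$ with its bounded approximate identity, and contradict the non-amenability of $\ell^\infty({\cal K}(\ell^2 \oplus \ell^p))$ (Theorem \ref{ithm} plus the $p$-summing estimate) through the dichotomy \cite[Theorem 4.3]{DR}, using that $\ell^p(E)$ is again an infinite-dimensional ${\cal L}^p$-space. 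The auxiliary facts you single out---${\cal L}^p$-stability of $\ell^p(E)$ and the bounded approximate identity of the $\ell^\infty$-sum---are precisely the ones the paper uses tacitly, and your justifications of them are adequate.
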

\par 
Of course, $E \cong \ell^p(E)$ for $E = \ell^p$ or $E = L^p[0,1]$, so that ${\cal B}(E)$ cannot be amenable for those $E$ as a consequence of Theorem \ref{nonamthm}.
\par 
Let $E$ be an infinite-dimensional, separable $L^p$-space. Then measure theoretic arguments yield that $E$ must be isometrically isomorphic to one of the following to $\ell^p$, $L^p[0,1]$, $L^p[0,1] \oplus \ell^p$, and $L^p[0,1] \oplus \ell^p_N$ for with $N \in \posints$ (\cite[Proposition III.A.1]{Woj}). In the first three cases, we have $\ell^p(E) \cong E$, so that ${\cal B}(E)$ is not amenable. For the third case, note that the canonical map ${\cal B}(\ell^p_N,L^p[0,1]) \Tensor {\cal B}(L^p[0,1],\ell^p_N)$ into ${\cal B}(\ell^p_N)$ is onto. Making using of the non-amenability of ${\cal B}(L^p[0,1])$ and \cite[Theorem 6.2]{GJW}, we conclude that ${\cal B}(E)$ is non-amenable in this case, too.
\par 
Hence, we obtain:
\begin{corollary}
Let $E$ be an infinite-dimensional, separable $L^p$-space. Then ${\cal B}(E)$ is not amenable.
\end{corollary}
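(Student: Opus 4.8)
The plan is to run the obvious case analysis on the isomorphic type of $E$; here $p$ is the number in $(1,\infty)$ from the preceding discussion (for $p=\infty$ there is nothing to prove, as no infinite-dimensional separable $L^\infty$-space exists). By the classical structure theorem for separable $L^p$-spaces (\cite[Proposition III.A.1]{Woj}), such an $E$ is isometrically isomorphic to exactly one of $\ell^p$, $L^p[0,1]$, $L^p[0,1]\oplus\ell^p$, or $L^p[0,1]\oplus\ell^p_N$ with $N\in\posints$, and since amenability of ${\cal B}(E)$ depends only on the isomorphism type of $E$, it suffices to treat these four.

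For the first three, the key point is that $\ell^p(E)\cong E$: this is clear for $\ell^p$; for $L^p[0,1]$ it holds because $\ell^p(L^p[0,1])$ is $L^p$ of a separable, non-atomic, $\sigma$-finite measure space of infinite total mass, hence isometrically isomorphic to $L^p[0,1]$; and for $L^p[0,1]\oplus\ell^p$ it follows by splitting the direct sum. Each of these three spaces is isomorphic to an $L^p$-space, hence is an ${\cal L}^p$-space by Example~\ref{Lpex}, so Theorem~\ref{nonamthm} applies and gives that ${\cal B}(\ell^p(E))={\cal B}(E)$ is not amenable. In particular ${\cal B}(L^p[0,1])$ is not amenable, which is the fact we feed into the last case.

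The case $E=L^p[0,1]\oplus\ell^p_N$ needs a separate argument, since here $\ell^p(E)\cong L^p[0,1]\oplus\ell^p$ is no longer isomorphic to $E$, so Theorem~\ref{nonamthm} merely re-proves the non-amenability of ${\cal B}(L^p[0,1]\oplus\ell^p)$. Instead one uses that $L^p[0,1]$ is a complemented direct summand of $E$: if $q\in{\cal B}(E)$ is the idempotent projecting $E$ onto its $L^p[0,1]$-summand along $\ell^p_N$, then the corner $q{\cal B}(E)q$ is isomorphic, as a Banach algebra, to ${\cal B}(L^p[0,1])$. If ${\cal B}(E)$ were amenable, then so would be this corner (a standard hereditary property of amenable Banach algebras; this is the substance of the factorization argument through \cite[Theorem 6.2]{GJW}, whose input is that $\id_{L^p[0,1]}$ and $\id_{\ell^p_N}$ factor suitably through the summands of $E$), and hence ${\cal B}(L^p[0,1])$ would be amenable, contradicting the non-amenability of ${\cal B}(L^p[0,1])$ established above. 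Thus ${\cal B}(E)$ is not amenable, completing the proof.

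Concerning difficulty: the substantive input is Theorem~\ref{nonamthm} (via Theorem~\ref{ithm} and Gordon's asymptotics $\pi_p(\id_{\ell^2_N})\sim\sqrt{N}$), which we are allowed to assume; granting it, the only delicate points are the measure-theoretic identity $\ell^p(L^p[0,1])\cong L^p[0,1]$ --- which lets the first three cases be absorbed into Theorem~\ref{nonamthm} --- and the realization that the fourth case genuinely falls outside that reduction and must instead be closed by transporting non-amenability down to the corner ${\cal B}(L^p[0,1])$. That descent is the only step where one must be a little careful, and it is exactly where \cite[Theorem 6.2]{GJW} is used.
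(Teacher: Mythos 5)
Your argument is correct and follows essentially the same route as the paper: the same four-case isometric classification from \cite[Proposition III.A.1]{Woj}, the first three cases absorbed by $\ell^p(E) \cong E$ together with Theorem \ref{nonamthm}, and the case $L^p[0,1] \oplus \ell^p_N$ closed by transporting amenability of ${\cal B}(E)$ down to the corner ${\cal B}(L^p[0,1])$ via \cite[Theorem 6.2]{GJW}, contradicting the non-amenability of ${\cal B}(L^p[0,1])$ obtained in the second case. The one point to phrase more carefully is that amenability does \emph{not} pass to a corner $q{\cal B}(E)q$ as an unconditional ``standard hereditary property''; the transfer needs the fullness/factorization hypothesis of \cite[Theorem 6.2]{GJW}---here the surjectivity of the canonical map ${\cal B}(\ell^p_N,L^p[0,1]) \Tensor {\cal B}(L^p[0,1],\ell^p_N) \to {\cal B}(\ell^p_N)$, i.e., that $\id_{\ell^p_N}$ factors through the \emph{other} summand $L^p[0,1]$---which is exactly the condition the paper makes explicit and which your parenthetical only gestures at.
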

\par 
Alternatively, we could have used Banach space theoretic methods to prove that up to---not necessarily isometric---isomorphism $\ell^p$ and $L^p[0,1]$ are the only in\-fi\-nite-di\-men\-sio\-nal, separable $L^p$-spaces (see \cite[p.\ 83]{Woj}).

\section{Questions}

As a consequence of the solution to the ``scalar plus compact'' problem, there is an infinite-dimensional Banach space $E$ such that ${\cal B}(E)$ is amenable, while ${\cal B}(E)$ is not amenable for $E = \ell^p$ with $p \in [1,\infty]$ or for $E = c_0$. If $E$ is such that $E^\ast$ has the approximation property, than ${\cal K}(E)$ is an ideal in ${\cal B}(E)$ with a bounded approximate identity (\cite[Corollary 3.1.5]{LoA}) and thus, provided that ${\cal B}(E)$ is amenable, must be amenable, too. The negative results from \cite{GJW} and \cite{BG} about the (non-)amenability of ${\cal K}(E)$ will therefore, by all likelihood, lead to further examples of non-amenable ${\cal B}(E)$.
\par 
The obvious big open problem has now become:
\begin{question}
Which are the Banach spaces $E$ for which ${\cal B}(E)$ is amenable? 
\end{question}
\par 
There is very little hope that this problem will ever be answered in its entirety, but maybe restricting to certain classes of well behaved Banach spaces will lead to results.
\par 
The space constructed in $E$ is a predual of $\ell^1$ and thus, in particular, is not reflexive. This prompts:
\begin{question}
Is there a reflexive, infinite-dimensional Banach space $E$ such that ${\cal B}(E)$ is amenable?  
\end{question}
\par 
If ${\cal B}(E)$ is amenable, then so is the Calkin algebra ${\cal C}(E)$. If $E$ is an $\ell^p$-space, then ${\cal C}(E)$ is not separable and not amenable, i.e., it is ``huge''. If $E$ is the space from \cite{AH}, however, then ${\cal C}(E)$ is ``tiny'', namely one-dimensional. The one-dimensional case is not the only situation in which an amenable Calkin algebra can occur: let $E$ be the space from \cite{AH}, and let $n \in \posints$. Then ${\cal B}(E^n) \cong M_n({\cal B}(E))$ is amenable with ${\cal C}(E^n) \cong M_n$.
But still, we don't have examples where ${\cal C}(E)$ is amenable and infinite-dimensional.
\par 
So, we ask:
\begin{question} \label{calkin}
Is there a Banach space $E$ such that ${\cal C}(E)$ is amenable and in\-fi\-nite-di\-men\-sio\-nal?
\end{question}
\par 
In \cite{DLW}, a Banach space $E$ is constructed with the property that there is a closed ideal ${\cal J}(E)$ of ${\cal B}(E)$ with ${\cal B}(E) / {\cal J}(E) \cong \ell^\infty$. Could this construction possibly be modified in such a way that ${\cal B}(E) / {\cal K}(E) \cong \ell^\infty$? If so, it would provide a positive answer to Question \ref{calkin}.
\renewcommand{\baselinestretch}{1.0}

\renewcommand{\baselinestretch}{1.2}
\dated
\vfill
\begin{tabbing}
\textit{Author's address}: \= Department of Mathematical and Statistical Sciences \\
\> University of Alberta \\
\> Edmonton, Alberta \\
\> Canada T6G 2G1 \\[\medskipamount]
\textit{E-mail}: \> \texttt{vrunde@ualberta.ca} \\[\medskipamount]
\textit{URL}: \> \texttt{http://www.math.ualberta.ca/$^\sim$runde/}
\end{tabbing}           

\end{document}